\newtheorem{theorem}{Theorem}[section]
\newtheorem{lemma}{Lemma}[section]
\newtheorem{corollary}[theorem]{Corollary}
\theoremstyle{definition}
\newtheorem{assumption}{Assumption}[section]
\newtheorem{definition}{Definition}[section]
\theoremstyle{remark}
\newtheorem{remark}{Remark}[section]
\newcommand\bR{\mathbb{R}}
\newcommand*{\beq}{\begin{equation}}
\newcommand*{\eeq}{\end{equation}}
\newcommand{\R}{\mathbb{R}}
\newcommand{\bit}{\begin{itemize}}
\newcommand{\eit}{\end{itemize}}
\newcommand{\osc}{\text{osc}}
\newcommand\D{\partial}
\begin{document}



\title[K. Dareiotis and   M. Gerencs\'er]{Local $L_\infty$-estimates, weak Harnack inequality, and stochastic continuity of solutions of SPDEs}


\author[K. Dareiotis]{Konstantinos Dareiotis}
\address[K. Dareiotis]{Uppsala University, Sweden}
\email{konstantinos.dareiotis@math.uu.se}

\author[M. Gerencs\'er]{M\'at\'e Gerencs\'er}
\address[M. Gerencs\'er]{University of Edinburgh, United Kingdom}
\email{m.gerencser@sms.ed.ac.uk}

\begin{abstract}
We consider stochastic partial differential equations under minimal assumptions: the coefficients are merely bounded and measurable and satisfy the stochastic parabolicity condition. In particular, the diffusion term is allowed to be scaling-critical. We derive local supremum estimates with a stochastic adaptation of De Giorgi's iteration and establish a weak Harnack inequality for the solutions. The latter is then used to obtain pointwise almost sure continuity.

\end{abstract}

\maketitle



\section{Introduction}

Harnack inequalities, introduced by \cite{Har}, provide a comparison of values at different points of nonnegative functions which satisfy a partial differential equation (PDE). Inequalities of this type have a vast number of applications, in particular, they played a significant role in the study of PDEs with discontinuous coefficients in divergence form. This is the celebrated De Giorgi-Nash-Moser theory (\cite{DG}, \cite{NASH}, \cite{MOS}), in which H\"older continuity of the solutions is established. Later, by using a weaker version of Harnack's inequality, a simpler proof in the parabolic case was given in \cite{KRU}. Harnack inequality and H\"older  estimate for equations in non-divergence form, also known as the Krylov-Safonov estimate, was proved in \cite{KRSAF} and \cite{SAF}. Since then, similar results have been proved for more general equations, including for example integro-differential operators of L\'evy type (see \cite{CAR}) and singular equations (see \cite{DIBE} and references therein). 

It is well known (see e.g. \cite{KSEE}, \cite{KrylovLP}) that the stochastic partial differential equations (SPDEs)
\begin{equation}\label{SPDE}
du_t=L_tu_t \,dt+M^k_t u_t \, dw^k_t,
\end{equation}
where $M^k$ are first order differential operators, are in many ways the natural stochastic extensions of parabolic equations $du_t=L_tu_t \, dt$. It is therefore also natural to ask whether the above mentioned results, fundamental in deterministic PDE theory, have stochastic counterparts. That is, what properties can one obtain for weak solutions of \eqref{SPDE} without posing any smoothness assumptions on the coefficients? Note also that with bounded coefficients the diffusion term in \eqref{SPDE} is critical to the parabolic scaling, and hence the question above fits in the recent activity in parabolic regularity
with critical lower order terms, see e.g. \cite{Chen} and its references. In some recent works regularity results have been obtained, but only for equations with at most zero order $M$, that is, with subcritical noise, for variants of this problem we refer to \cite{Kim}, \cite{HWW}, \cite{DDH}, and \cite{KUK}. The methods in all of these works rely strongly on the absence of the derivatives in the noise, in which case the difficulty coming from the lack of regularity of the coefficients can be separated from the stochastic nature of the equation and can be essentialy reduced to the deterministic case. In particular, adaptation of the classical techniques of \cite{DG}, \cite{NASH}, \cite{MOS} to the stochastic setting is not required, which is indeed what the scaling heuristic would suggest. 

Concerning equations of the general form and under minimal assumptions - boundedness, measurability, and ellipticity - on the coefficients, few results are known. They were considered in \cite{MAKO} (see also \cite{Qiu2}) and, in a backward setting, in \cite{BSPDE}, where global boundedness of the solutions was proved. In the present paper,  we prove local $L_\infty$-estimates for certain functions of the solutions, in terms of the corresponding $L_2$-norms, by using a stochastic version of De Giorgi's iteration. By virtue of  these estimates,  following the approach of \cite{KRU}, we establish a stochastic version of the aforementioned weak Harnack inequality in Theorem \ref{thm: harnack}. Here ``weak'' stands for that in order to estimate the minimum of a nonnegative solution $u$, not only the maximum of $u$ is required to be bounded from below by $1$, but $u$ itself on a positive portion of the domain. For deterministic equations by elementary arguments one can deduce H\"older continuity from such a weak Harnack inequality. These considerations however are quite sensitive to the measurability problems arising with the presence of stochastic terms, and therefore we need a far less straightforward argument to prove stochastic continuity of the solutions, which is formulated in Theorem \ref{thm: continuity}. We note that Harnack inequalities for solutions of SPDEs - not to be confused with Harnack inequalities for the transition semigroup of SPDEs, for which we refer the reader to \cite{Wang} and the references therein - have not been previously established even for equations with smooth coefficients. 

Let us introduce the notations used throughout the paper. Let $d\geq1$, and for $R \geq 0$ let $B_R=\{x\in\R^d:|x| <  R\}$, $G_R=[4-R^2,4]\times B_R$, and $G:=G_2$. $\mathcal{B}(B_R)$ will denote the Borel $\sigma$-algebra on $B_R$. Subsets of $\R^{d+1}$ of the form $J\times (B_R+x)$, where $J$ is a closed interval in $[0,4]$ and $x \in \bR^d$, will be referred to as cylinders. If $A$ is a set, $I_A$ will denote the indicator function of A. The inner product in $L_2(B_2)$ will be denoted by $(\cdot,\cdot)$. The set of all compactly supported smooth functions on $B_{R}$ will be denoted by $C_c^{\infty}(B_{R})$. The space of $L_2(B_{R})$-functions whose generalized derivatives of first order lie in $L_2(B_{R})$ is denoted by $H^1(B_{R})$, while the completion of $C_c^{\infty}(B_{R})$ with respect to the $H^1(B_{R})$ norm is denoted by $H^1_0(B_{R})$. For $p\in [1,\infty]$ and a subset $A$ of $\R^d$ or $\R^{d+1}$, the norm in $L_p(A)$ will be denoted by $|\cdot|_{p,A}$ and $\|\cdot \|_{p,A}$, respectively. By $\inf,\sup, \osc,$  we always mean essential ones. We fix a complete probability space $(\Omega, \mathscr{F}, P)$ and take a 
right-continuous filtration $(\mathscr{F}_t)_{t\geq0}$, 
such that $\mathscr{F}_0$ contains all 
$P$-zero sets, and a sequence of 
independent 
real valued  $\mathscr{F}_t$-Wiener processes  $\{w^k_t\}_{k=1}^{\infty}$. The predictable $\sigma$-algebra on $\Omega \times [0,4]$ is denoted by $\mathscr{P}$. Constants in the calculations are usually denoted by $C$, and, as usual, may change from line to line. The summation convention with respect to repeated integer-valued indices will be in effect.

The rest of the paper is organized as follows. In Section 2 we formulate the assumptions and state the main results. In Section 3 we present some preliminary results, which are then used in the proofs of the main results in Section 4.

\section{Formulation and main results}

The operators in \eqref{SPDE} are assumed to be of the form
$$
L_t\varphi=\D_i(a^{ij}_t\D_j\varphi),\,\,\,M_t^k\varphi=\sigma_t^{ik}\D_i\varphi,
$$
where we pose the following assumption on the coefficients throughout the paper.
\begin{assumption}\label{as0}
For $i , j \in \{1,...,d\}$, the functions $a^{ij}=a^{ij}_t(x)(\omega)$ and $\sigma^i =(\sigma^{ik}_t(x)(\omega))_{k=1}^\infty$ are $\mathscr{P}\times\mathcal{B}(B_2)$-measurable functions
on $\Omega \times [0, \infty) \times B_2$ with values in $\bR$ and $l_2$, respectively, bounded by a constant $K$, such that 
$$
(2a^{ij}-\sigma^{ik}\sigma^{jk})z_iz_j\geq\lambda|z|^2
$$
for a $\lambda>0$ and for any $z=(z_1,\ldots,z_d)\in\R^d.$
\end{assumption}
We will denote by $\mathcal{H}$ the set of all strongly continuous $L_2(B_2)$-valued predictable processes $u=(u_t)_{t\in[0,4]}$ on $\Omega \times [0,4]$ such that $u \in L_2([0,4], H^1(B_2))$ with probability 1.

\begin{definition}We say that $u$  is a solution of \eqref{SPDE}, 
if $u \in \mathcal{H}$ and  for each $\phi \in C_c^\infty(B_2)$, with probability one,
$$
(u_t,\phi)=(u_0,\phi)-\int_0^t(a^{ij}_t\D_i u_t, \D_j \phi )dt+\int_0^t (\sigma^{ik}_t \D_i u_t, \phi)dw^k_t,
$$
for all $t \in [0,4]$. 
\end{definition}

The class $\mathcal{H}$ is the ``right one'' to seek solutions in, in the sense that the classical theory (see e.g. \cite{KSEE}) guarantees the existence of a unique solution of \eqref{SPDE} in $\mathcal{H}$, when coupled with appropriate initial and boundary condition. Elements of $\mathcal{H}$ however, in general don't have any kind of spatial continuity (unless $d=1$), and are not even in $L_p$ for high values of $p$. 

Let us denote by $\mathcal{C}$ the set of twice continuously differentiable  functions $f$ from $\R$ to $\R$, such that  both $f'$ and $ff''$ are bounded.  The next is our first main result. 

\begin{theorem}\label{thm: supremum estimate}
Let $f\in\mathcal{C}$ such that $ff''\geq 0$, and let $u$ be a solution of \eqref{SPDE}. Then there exist positive constants $\delta,C, \hat{C}$, depending only on $d,\lambda,K$, such that for any $\alpha>0$ and $\kappa\geq 1$
\begin{itemize}
\item[(i)] $P(\|f(u)^+\|_{\infty,G_1}^2\geq\hat{C}\kappa\alpha,\|f(u)^+\|_{2,G_{3/2}}^2\leq\alpha)\leq C\kappa^{-\delta}$,
\item[(ii)]
$P(\|f(u)\|_{\infty,G_1}^2\geq\hat{C}\kappa\alpha,\|f(u)\|_{2,G_{3/2}}^2\leq\alpha)\leq C\kappa^{-\delta}.$
\end{itemize}
Let $f$ be as above, let $u$ be a solution of \eqref{SPDE} on $[s,r]\times B_2$, where  $0 \leq s< r \leq 4$, and suppose that $f(u)(s,\cdot)\equiv 0$. Then there exist positive constants $\delta,C, \hat{C}$, depending only on $d,\lambda,K$, such that for any $\alpha>0$ and $\kappa\geq1$
\begin{itemize}
\item[(iii)]$
P(\|f(u)^+\|^2_{\infty, [s,r]\times B_1}\geq  \hat{C} \kappa\alpha, \|f(u)^+\|^2_{2, [s,r]\times B_2}\leq\alpha)\leq C\kappa^{-\delta}
$,
\item[(iv)] $P(\|f(u)\|^2_{\infty, [s,r]\times B_1}\geq  \hat{C} \kappa\alpha, \|f(u)\|^2_{2, [s,r]\times B_2}\leq\alpha)\leq C\kappa^{-\delta}$.
\end{itemize}
\end{theorem}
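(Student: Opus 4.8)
The plan is to prove (i) for $\alpha=1$ --- the general $\alpha$ follows by applying the result to $f/\sqrt\alpha$, which stays in $\mathcal C$ and satisfies $(f/\sqrt\alpha)(f/\sqrt\alpha)''=\alpha^{-1}ff''\ge0$. Then (ii) is (i) applied to $\pm f$ together with a union bound, using $\|f(u)\|_{\infty,G_1}=\max\{\|f(u)^+\|_{\infty,G_1},\|(-f(u))^+\|_{\infty,G_1}\}$ and $\|f(u)\|_{2,G_{3/2}}^2=\|f(u)^+\|_{2,G_{3/2}}^2+\|(-f(u))^+\|_{2,G_{3/2}}^2$; and (iii)--(iv) come from running the same argument on $[s,r]\times B_2$, where the hypothesis $f(u)(s,\cdot)\equiv0$ merely removes the need for a cut-off in the time variable. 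Write $v:=f(u)$. The first ingredient is a stochastic energy inequality for $v$: applying It\^o's formula for $|\cdot|_2^2$ to $|(v_t-\ell)^+\zeta|_2^2$ with $\ell\ge0$ and a space--time cut-off $\zeta$ (after mollifying $x\mapsto(x-\ell)^+$ and passing to the limit), and using $\partial_i v=f'(u)\partial_i u$, that $ff''\ge0$ forces $f''(u)\ge0$ on $\{v>\ell\}$ --- so the non-divergence drift produced by the It\^o correction has the good sign there --- and the parabolicity $(2a^{ij}-\sigma^{ik}\sigma^{jk})z_iz_j\ge\lambda|z|^2$, one gets for every $t$
\[
|(v_t-\ell)^+\zeta|_2^2+\lambda\int_0^t\!\!\int\zeta^2|\nabla(v-\ell)^+|^2\le C\int_0^t\!\!\int|\nabla\zeta|^2\big((v-\ell)^+\big)^2+N_t,
\]
with $N_t=\int_0^t 2\big((v_s-\ell)^+\zeta^2,\sigma_s^{ik}\partial_i v_s\big)\,dw^k_s$. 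The key structural fact is that, since the one matrix $2a-\sigma\sigma^{\mathsf T}$ governs both the dissipation and, via the It\^o correction, the martingale, Cauchy--Schwarz together with $|\sigma|\le K$ and parabolicity give
\[
\langle N\rangle_t\le C\Big(\sup_{s\le t}|(v_s-\ell)^+\zeta|_2^2\Big)\int_0^t\!\!\int\zeta^2|\nabla(v-\ell)^+|^2,
\]
so $\langle N\rangle_t$ is controlled by the product of the two terms on the left-hand side --- i.e. by the square of the running energy.

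I would then set up a stochastic De Giorgi iteration. With $t_0:=4-(3/2)^2$, let $\tau:=\inf\{t\ge t_0:\int_{t_0}^t\int_{B_{3/2}}(v_s^+)^2\ge1\}\wedge4$; on $\{\|v^+\|_{2,G_{3/2}}^2\le1\}$ one has $\tau=4$ and $\|v^+\|_{\infty,G_1}^2=\|v^+\|_{\infty,[3,\tau]\times B_1}^2$, so it suffices to bound $P\big(\|v^+\|_{\infty,[3,\tau]\times B_1}^2\ge\hat C\kappa\big)$, knowing only the \emph{deterministic} fact $\int_{t_0}^\tau\int_{B_{3/2}}(v^+)^2\le1$. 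Choose radii $R_n\downarrow1$ ($R_0=3/2$), levels $k_n\uparrow k:=\sqrt{\hat C\kappa}$ with $k_n=k(1-2^{-n})$, cut-offs adapted to the cylinders $Q_n:=[4-R_n^2,\tau]\times B_{R_n}$ (with intermediate radii so that $\zeta_{n+1}$ is supported where $\zeta_n\equiv1$), and, for $n\ge1$,
\[
A_n:=\sup_t|(v_t-k_n)^+\zeta_n|_2^2+\lambda\!\int\!\!\int\zeta_n^2|\nabla(v-k_n)^+|^2+\!\int\!\!\int_{Q_n}\!\big((v-k_n)^+\big)^2,\qquad A_0:=\!\int\!\!\int_{Q_0}(v^+)^2\le1 .
\]
Feeding the energy inequality into the parabolic Sobolev embedding and using Chebyshev on the level sets, $|\{v>k_{n+1}\}\cap Q_n|\le(k2^{-n-1})^{-2}A_n$, gives the recursion
\[
A_{n+1}\le C\,b^{\,n}k^{-2\theta}A_n^{1+\theta}+\sup_t|N_t^{(n)}|\quad(n\ge1),\qquad A_1\le C A_0+\sup_t|N_t^{(0)}|,
\]
with $\theta:=\tfrac2{d+2}$ and $b:=2^{2+2\theta}$, where $N^{(n)}$ is the step-$n$ martingale; by the estimate above, $\langle N^{(n)}\rangle_t$ is controlled by the square of the running version of $A_{n+1}$.

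The heart of the matter is to run this recursion to extinction despite the martingale terms $\sup_t|N_t^{(n)}|$ --- which, \emph{because the noise is scaling-critical, are of the same order as the energy itself}, not a lower-order perturbation; this is precisely what rules out a pathwise, or a naive moment, iteration, and is the step I expect to be the main obstacle. The deterministic half is classical De Giorgi bookkeeping: with geometric thresholds $\epsilon_n:=\epsilon_1 b^{-(n-1)/\theta}$ for $n\ge1$ (and $\epsilon_0:=1$), a large enough $\hat C$ (depending only on $d,\lambda,K$) makes $Cb^nk^{-2\theta}A_n^{1+\theta}\le\tfrac14\epsilon_{n+1}$ on $\{A_j\le\epsilon_j:\,j\le n\}$, using that $k^2=\hat C\kappa$ is large. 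For the martingale I would use the structural estimate: stopping $N^{(n)}$ at the first time its running supremum reaches $\tfrac14\epsilon_{n+1}$ caps the quadratic variation of the stopped martingale at a fixed multiple of $\epsilon_{n+1}^2$ (since $\langle N^{(n)}\rangle$ is tied to the running energy at level $n+1$), while on $\{A_j\le\epsilon_j:\,j\le n\}$ the event $\{A_{n+1}>\epsilon_{n+1}\}$ forces that supremum to be reached; Doob's (or the Burkholder--Davis--Gundy) inequality for the stopped martingale then bounds the probability of this.

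The subtle point --- and where most of the work goes --- is that such a crude bound is only by a constant, which is neither summable in $n$ nor decaying in $\kappa$; the gain must be recovered elsewhere. The route I would try is to squeeze one further power of the \emph{smallness of the level sets} (whose measure is at most a constant times $k^{-2}4^nA_n$, with $k^2=\hat C\kappa$ large) into the quadratic-variation estimate, so that $\langle N^{(n)}\rangle$ acquires a factor $\big(k^{-2}4^nA_n\big)^{c}$ with some $c=c(d)>0$, upgrading the per-step probability to one of the form $C(\hat C\kappa)^{-\delta}2^{-n}$. Granting this, summing over $n$ and using
\[
\{A_0\le1\}\cap\{\|v^+\|_{\infty,[3,\tau]\times B_1}^2>k^2\}\subseteq\bigcup_{n\ge0}\big(\{A_j\le\epsilon_j:\,j\le n\}\setminus\{A_j\le\epsilon_j:\,j\le n+1\}\big)
\]
--- valid because on the complementary event $A_n\to0$, hence $\sup_{[3,\tau]\times B_1}v^+\le k$ --- yields the bound $C\kappa^{-\delta}$ with $\delta>0$ depending only on $d$, and $C,\hat C$ on $d,\lambda,K$. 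This proves (i), and (ii)--(iv) follow as described at the outset.
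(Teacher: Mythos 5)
Your setup (reduction to a single scale, the energy inequality via It\^o's formula for $(f(u)-\ell)^+$ using $ff''\ge0$ and the parabolicity condition, the shrinking cylinders, and the recursion $A_{n+1}\le Cb^nk^{-2\theta}A_n^{1+\theta}+\sup_t|N^{(n)}_t|$) matches the paper's, as do the deductions of (ii)--(iv) from (i). But the proof has a genuine gap exactly where you flag it: the per-step probabilistic estimate on $\sup_t|N^{(n)}_t|$ is never established. Your proposed route --- capping $\langle N^{(n)}\rangle$ by stopping at the first time the running supremum reaches $\tfrac14\epsilon_{n+1}$ and applying Doob/BDG --- gives, as you admit, only an $O(1)$ bound per step; and the hoped-for rescue, extracting an extra factor $(k^{-2}4^nA_n)^c$ of level-set smallness into the quadratic variation, does not go through with the available information: $\langle N^{(n)}\rangle$ is already controlled by $\sup_s|(v_s-k_{n+1})^+\zeta|_2^2\cdot\iint\zeta^2|\nabla(v-k_{n+1})^+|^2$, whose integrand already lives on the level set, and squeezing out an additional power of its measure would require higher integrability of $v$ or $\nabla v$ that is precisely what one does not have for a merely $\mathcal{H}$-valued solution. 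Since you write ``Granting this\dots'', the decisive step is assumed rather than proved.

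The paper closes this gap by a different, and much simpler, mechanism that avoids quadratic-variation estimates entirely. Because the left-hand side of the energy inequality is nonnegative, the martingale of step $j$ satisfies the \emph{pathwise, deterministic} lower bound $\inf_t m^{j+1}_t\ge -C4^j\|v^{j+1}\|^2_{2,F_j}$ (equation \eqref{eq:infimum}); hence on the good event $\{V_j\le\gamma_0\gamma^j\}$ one knows $\inf_t m^{j+1}_t\ge-\alpha_j$ with $\alpha_j=C4^j\gamma_0\gamma^j$. Lemma \ref{lem: martingale 2} --- a gambler's-ruin estimate obtained by time-changing the local martingale to a Brownian motion and applying optional stopping, $P(\inf m\ge-\alpha,\ \sup m\ge\varkappa\alpha)\le 1/(\varkappa+1)$ --- then yields, with $\varkappa=\kappa4^j$, a failure probability $\kappa^{-1}4^{-j}$ per step, which is simultaneously summable in $j$ and decaying in $\kappa$; on the surviving event $\sup_t m^{j+1}_t\le C\kappa16^j\gamma_0\gamma^j$, and the recursion closes with $\gamma_0\sim\kappa^{-(d+2)/2}$. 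This one-sided-bound trick is the key idea of the stochastic De Giorgi iteration here, and it is the ingredient your argument is missing.
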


To formulate the Harnack inequality, let $\eta\in(0,1)$, and denote by $\Lambda_\eta$ the set of functions $v$ on $[0,4]\times B_{2}$ such that $v\geq 0$ and 
$$
|\{x\in B_2|\,v_0(x)\geq 1\}|\geq\eta|B_2|.
$$
Let us recall the Harnack inequality (essentially) proved in \cite{KRU} : 
\emph{If $u$ is a solution of $du=\D_i(a^{ij}\D_ju)dt$ and $u\in\Lambda_{1/2}$, then $$\inf_{G_1}u\geq h$$ with $h=h(d,\lambda,K)>0$.} 
In the stochastic case clearly it can not be expected that such a lower estimate holds uniformly in $\omega$. It does hold, however, with $h$ above replaced with a strictly positive random variable, this is the assertion of our main theorem.

\begin{theorem}\label{thm: harnack}
Let $u$ be a solution of \eqref{SPDE} such that on an event $A \in \mathscr{F}$, $u\in \Lambda_{\eta}$. 
Then for any $N>0$  there exists a set $D \in \mathscr{F}$, with $P(D) \leq  CN^{-\delta} $, such that on $A \cap D^c$, 
$$
\inf_{(t,x)\in G_1} u_t(x) \geq e^{- N}.
$$
where  $C$ and $\delta$ depend only on $d$, $\lambda$, $\eta$, and $K$.
\end{theorem}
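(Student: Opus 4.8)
The plan is to follow the classical Krylov-Safonov/Krylov \cite{KRU} scheme of iterating a single ``expansion of positivity'' step, but with all the estimates made quantitative in probability via Theorem \ref{thm: supremum estimate}. The key observation is that if $u$ is a solution then so is $c-u$ for any constant $c$ (since the equation is linear and the coefficients do not depend on $u$), and more usefully, any function $f(u)$ with $f\in\mathcal{C}$, $ff''\geq0$ satisfies a supremum estimate. The natural choice is $f(r)=(\log^+ (1/r))\wedge M$-type functions, or rather $f(r) = (-\log r)^+$ suitably regularized and truncated so that $f\in\mathcal C$ and $ff''\ge 0$: applied to a nonnegative solution $u$ bounded below by $1$ on a portion of the domain, $v:=f(u)=(\log(1/u))^+$ is a nonnegative subsolution-like object which is small ($=0$) where $u$ is large, and the De Giorgi estimate (iii)--(iv) controls $\|v\|_{\infty,[s,r]\times B_1}$ in terms of $\|v\|_{2,[s,r]\times B_2}$ off an event of probability $\le CN^{-\delta}$. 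So the strategy is: (1) establish a ``weak Harnack'' / measure-to-pointwise step saying that control of $u$ from below on a set of positive measure at one time propagates to a pointwise lower bound $u\ge e^{-N}$ on a slightly smaller cylinder a definite amount of time later, off a small-probability event; then (2) note that under Assumption \ref{as0} the hypothesis $u\in\Lambda_\eta$ on $A$ is exactly the input this step needs, and conclude.

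For step (1) I would work with $v=f(u)$ for $f(r)=-\log(r\vee \varepsilon)$ truncated at height $M$ (so $f'$ bounded, $ff''\ge0$ on the relevant range after a harmless modification of $f$ near $0$; one passes $\varepsilon\to0$, $M\to\infty$ at the end). The crucial quantitative ingredient beyond Theorem \ref{thm: supremum estimate} is an $L_2$ energy bound: testing the equation for $v$ (via It\^o's formula for $f(u)$, which is where $ff''\ge0$ and the stochastic parabolicity condition cooperate) against a cutoff, one gets that $\|v\|_{2,[s,r]\times B_2}^2$ cannot be too large with high probability, using that $v_0$ vanishes on a set of measure $\ge \eta|B_2|$ (from $u\in\Lambda_\eta$, since there $u_0\ge1$ so $f(u_0)=0$) together with a Poincar\'e-type inequality to absorb the gradient term — this is the stochastic analogue of the deterministic energy estimate driving \cite{KRU}. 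Combining this $L_2$ bound with parts (iii)--(iv) of Theorem \ref{thm: supremum estimate} (with $s=0$, after arranging $f(u)(0,\cdot)\equiv0$ — here one should instead use parts (i)--(ii), or re-initialize) yields that, off an event $D$ with $P(D)\le CN^{-\delta}$, $\sup_{G_1} v \le N$, i.e. $\inf_{G_1} u\ge e^{-N}$, on $A\cap D^c$.

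The main obstacle I anticipate is the logarithmic/energy estimate in the presence of the critical noise: unlike the zero-order-noise situation quoted in the introduction, the It\^o correction term from $M^k_t u_t\,dw^k_t$ is genuinely second order in the derivatives of $u$, and controlling $\E\int\int|\nabla v|^2$ requires that the ``good'' part $\lambda|\nabla v|^2$ coming from $L$ strictly dominates the It\^o term $\tfrac12\sigma^{ik}\sigma^{jk}\D_i v\,\D_j v$ — which is precisely what Assumption \ref{as0} guarantees, but only after $f$ is chosen so that the algebra works (this is why $\mathcal{C}$ is defined with $ff''$ bounded and why $ff''\ge0$ is imposed). A second, more bookkeeping-type obstacle is measurability: one must take $D\in\mathscr{F}$ genuinely (not merely a $P$-null modification), so each ``high probability'' statement must be an honest event, and the final $\inf$ over the cylinder $G_1$ must be handled as an essential infimum — but since Theorem \ref{thm: supremum estimate} already delivers honest events of the required form, this is routine. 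Finally, one chirps $\varepsilon\to0,M\to\infty$ and uses that $u>0$ a.e. (or works on $\{u>0\}$) to remove the truncation; the constants $C,\delta$ remain of the stated dependence because every inequality used is scale-invariant with constants depending only on $d,\lambda,K,\eta$.
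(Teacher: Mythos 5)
Your overall route---a regularized logarithmic transform of $u$, a stochastic energy estimate for it, a Poincar\'e inequality, and then Theorem \ref{thm: supremum estimate} to pass from an $L_2$ bound on the transformed function to a supremum bound---is the same Kru\v{z}hkov-style scheme the paper follows. But as written there is a genuine gap in the step that is supposed to produce the $L_2$ bound. The energy estimate only controls $\int_0^4\int\varphi^2|\nabla v|^2$ for $v$ a truncated $\log\bigl(h/(u+\epsilon)\bigr)$; to convert this into a bound on $\|v\|_2$ over the space-time cylinder you must apply Poincar\'e's inequality on each time slice, and for that you need, for \emph{every} $t\in[0,4]$, a set of measure bounded below on which $v_t$ vanishes, i.e.\ on which $u_t\geq h$. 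The hypothesis $u\in\Lambda_\eta$ gives such a set only at $t=0$, and ``$v_0$ vanishes on a large set plus Poincar\'e'' gives no control of $v_t$ for $t>0$ from the gradient bound alone. The paper devotes a separate lemma (Lemma \ref{lem: weakweak}) to propagating the measure bound to all times, off an event of probability $\leq Ce^{-cN}$, and this propagation is itself a stochastic argument: it uses the exponential martingale estimate of Lemma \ref{lem: martingale} together with the structural inequality $F''\geq(F')^2$ satisfied by the mollified logarithm, which is precisely what lets the quadratic variation of the stochastic integral be absorbed into the dissipative term; the same mechanism is needed again for the energy estimate itself. Your proposal instead invokes $f\in\mathcal C$ and $ff''\geq0$, which are the hypotheses of the sup estimate and do not by themselves justify the ``high probability'' $L_2$/energy bound; without identifying $F''\geq(F')^2$ and the exponential martingale inequality, and without the time-propagation lemma feeding the slice-wise Poincar\'e inequality, the chain of estimates does not close.

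A second, smaller problem is the plan to remove the truncation by letting $\varepsilon\to0$, $M\to\infty$, ``using that $u>0$ a.e.'': positivity of $u$ is not known a priori (it is essentially the content of the theorem), and the estimates degrade like $\log(1/\varepsilon)$ (both the bound on the truncated logarithm entering the $L_2$ estimate and, consequently, the output of Theorem \ref{thm: supremum estimate}), so no such limit can be taken. One must instead keep $\varepsilon$ fixed, track the $N$- and $\varepsilon$-dependence quantitatively, and balance at the end by choosing $\varepsilon\approx e^{-c'N}$, which is how the paper obtains $\inf_{G_1}u\geq h e^{-CN}-\varepsilon\geq e^{-C''N}$ and then concludes after an inessential rescaling of $N$.
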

 Later on we will refer to the quantity $e^{-N}$ above as the lower bound corresponding to the probability $ CN^{-\delta}$. With the help of Theorem \ref{thm: harnack}, we obtain the following stochastic continuity result.

\begin{theorem}\label{thm: continuity}
Let $u$ be a solution of \eqref{SPDE} and $(t_0,x_0)\in(0,4)\times B_2$. Then $u$ is almost surely continuous at $(t_0,x_0).$
\end{theorem}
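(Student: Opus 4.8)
The plan is to reduce pointwise a.s. continuity at a fixed point $(t_0,x_0)$ to an oscillation-decay statement obtained by iterating the weak Harnack inequality of Theorem \ref{thm: harnack}. Fix $(t_0,x_0)\in(0,4)\times B_2$ and, after a parabolic rescaling and translation centering the relevant cylinders at $(t_0,x_0)$, consider the nested sequence of cylinders $Q_n$ shrinking to $(t_0,x_0)$ at the natural parabolic rate, say $Q_n=[t_0-r_n^2,t_0]\times(x_0+B_{r_n})$ with $r_n=\rho^n$ for a fixed ratio $\rho\in(0,1)$ to be chosen. Let $\omega_n=\osc_{Q_n}u$ (essential oscillation). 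I would show that there is a deterministic $\theta\in(0,1)$ and random variables controlling the failure probabilities so that, with high probability, $\omega_{n+1}\le\theta\,\omega_n$ at each step; summing the geometric decay then gives that the oscillations tend to $0$, which is equivalent to the existence of an a.s.\ finite limit $\lim u=L$ at $(t_0,x_0)$ in the essential sense, and a standard Lebesgue-point/approximate-continuity argument upgrades this to genuine continuity of (a modification of) $u$ at the point. Since $u$ is already a fixed $\mathcal H$-element (strongly $L_2$-continuous in time), the statement ``$u$ is a.s.\ continuous at $(t_0,x_0)$'' should be read as: for a.e.\ $\omega$, $(t,x)\mapsto u_t(x)(\omega)$ has a limit as $(t,x)\to(t_0,x_0)$ equal to some value, and this is precisely what oscillation decay delivers.

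The core single-step estimate comes from applying Theorem \ref{thm: harnack} to the two nonnegative functions $v^{\pm}=\osc_{Q_n}u \mp (u-c_n)$ suitably normalized, where $c_n$ is (close to) the essential infimum or a median of $u$ on $Q_n$; the point is that one of $v^+,v^-$ is at least half of $\osc_{Q_n}u$ on at least half of the spatial slice at the initial time of $Q_n$, so it lies in $\Lambda_\eta$ (for a fixed $\eta$, e.g.\ $\eta=1/2$) after rescaling $Q_n$ to the standard configuration $[0,4]\times B_2$. Theorem \ref{thm: harnack} with a chosen level $N$ then forces $v^{\pm}\ge e^{-N}\cdot(\text{normalization})$ on the inner cylinder $G_1$ (which pulls back to $Q_{n+1}$ for an appropriate $\rho$), on an event whose complement has probability $\le CN^{-\delta}$; this translates into $\osc_{Q_{n+1}}u\le(1-\tfrac12 e^{-N})\osc_{Q_n}u=:\theta\,\osc_{Q_n}u$ off a bad event $D_n$ with $P(D_n)\le CN^{-\delta}$. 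Because we get to \emph{choose} $N$, and more importantly a \emph{different} $N=N_n$ at each scale, we can make the bad probabilities summable: take $N_n$ slowly increasing (e.g.\ $N_n\sim (2/\delta)\log n + \log n$) so that $\sum_n P(D_n)<\infty$; by Borel--Cantelli only finitely many $D_n$ occur a.s., hence from some (random) scale $n_0(\omega)$ on, $\osc_{Q_{n+1}}u\le\theta_n\,\osc_{Q_n}u$ with $\theta_n=1-\tfrac12 e^{-N_n}$, and since $N_n=o(\log(1/r_n))$-type growth keeps $\prod\theta_n\to 0$ while still $\sum(1-\theta_n)=\infty$, the oscillations go to zero. (One must balance: $N_n$ large enough for summability of $CN_n^{-\delta}$, small enough that $\prod_{n}(1-\tfrac12 e^{-N_n})=0$; both hold for, say, $N_n=c\log n$ with $c\delta>1$.)

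The main obstacle — and the reason the authors flag this as ``far less straightforward'' than the deterministic case — is the measurability/adaptedness bookkeeping: Theorem \ref{thm: harnack} is an assertion about a solution on the \emph{standard} domain $[0,4]\times B_2$, so at each scale $n$ one must verify that the parabolically rescaled function $u^{(n)}$ built from $u$ on $Q_n$ is again a solution of an equation of the form \eqref{SPDE} with coefficients still satisfying Assumption \ref{as0} (with the same $\lambda,K$, since the stochastic parabolicity structure is scale-invariant under the parabolic scaling, using correspondingly time-changed/scaled Wiener processes adapted to a rescaled filtration), and that the event ``$u\in\Lambda_\eta$ on $Q_n$'' is genuinely $\mathscr F$-measurable and that the conditioning implicit in iterating across scales does not destroy the hypotheses. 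Concretely, the constants $c_n$ (essential infima/medians on $Q_n$) are $\mathscr F$-measurable but \emph{not} $\mathscr F_{t}$-adapted at the initial time of $Q_n$, so the functions $v^{\pm}$ are not honest solutions with the right initial behavior unless one is careful; the remedy is presumably to apply the version of the supremum estimates / Harnack inequality that only needs the sign and the $\Lambda_\eta$-type condition at the starting time-slice, treating $c_n$ as a random constant and absorbing it, and to note that Theorem \ref{thm: harnack} as stated already allows the event $A\in\mathscr F$ to be arbitrary. A secondary technical point is passing from ``essential oscillation $\to 0$'' to ``continuous at the point'': one shows the essential limit exists a.s.\ and then argues that, because $u_t$ is strongly $L_2$-continuous in $t$ and the spatial oscillation bounds are uniform in the time variable within each $Q_n$, the pointwise limit along $(t,x)\to(t_0,x_0)$ exists and is unambiguous, giving continuity at $(t_0,x_0)$ after modification on a null set of $(\omega,t,x)$.
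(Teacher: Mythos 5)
Your overall strategy (iterate the weak Harnack inequality on shrinking parabolic cylinders to force oscillation decay at $(t_0,x_0)$) is the same as the paper's, but two of your key steps do not work as stated. First, the quantitative scheme with scale-dependent levels $N_n$ is arithmetically inconsistent with the tail bound of Theorem \ref{thm: harnack}: the failure probability is only polynomial, $P(D_n)\leq CN_n^{-\delta}$, so summability forces $N_n\gtrsim n^{1/\delta}$ (your choice $N_n\sim c\log n$ gives $\sum(\log n)^{-\delta}=\infty$, so Borel--Cantelli does not apply), while your contraction factors $\theta_n=1-\tfrac12e^{-N_n}$ give $\osc\to0$ only if $\sum_n e^{-N_n}=\infty$, i.e. only if $N_n\lesssim\log n$. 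These two requirements are incompatible, so with your bookkeeping the product $\prod_n\theta_n$ stays bounded away from $0$ and the oscillations need not vanish. The paper avoids this entirely: it fixes one probability $\delta$ and one level $N$ (hence a fixed contraction $1-\epsilon_2$ and a fixed per-scale failure probability), exploits that oscillations over nested cylinders are monotone, works on $\limsup_n\Omega_1^{(n)}$ (whose probability is $\geq1-2\delta$ by Fatou, no summability needed), and only at the very end lets $\delta\to0$.

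Second, the measurability obstruction you flag is not resolved by your remedy, and it is exactly where the real work lies. Your centering/normalizing constants ($\osc_{Q_n}u$ and a median or essential infimum over the whole cylinder $Q_n$) depend on the future relative to the initial time of $Q_n$; subtracting or dividing by them destroys predictability, so the resulting process is not a solution of \eqref{SPDE} in the sense of the definition and Theorem \ref{thm: harnack} cannot be invoked (the freedom in the event $A\in\mathscr{F}$ does not help with this, since it is the \emph{process} that must be an element of $\mathcal{H}$). The paper's fix is substantive: it normalizes only by $\sup$ and $\inf$ of $u$ over the \emph{initial time slice} $\{t_0-4s\}\times B_2$, which are measurable at that time, and then must control how far $u$ can escape these bounds forward in time; this is done by applying Theorem \ref{thm: supremum estimate} (iii) to $(u-\sup_{\{t_0-4s\}\times B_2}u)^+$ and $(\inf_{\{t_0-4s\}\times B_2}u-u)^+$ (which vanish at the initial time) and choosing $s$ small so that the overshoot is at most $\epsilon_1^2/6$. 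This produces the dichotomy in Lemma \ref{lem: intermediate}: either the initial-slice oscillation is below $\epsilon_1/3$ and then the oscillation on the sub-cylinder is below $\epsilon_1$, or the normalized function lies in $[0,2(1+\epsilon_1)]$ on the whole forward cylinder (guaranteeing the nonnegativity required for $\Lambda_\eta$, which your $v^{\pm}$ only get by using future-dependent constants) and the Harnack inequality yields the contraction $1-\epsilon_2$. Without this mechanism, or some substitute for it, your single-step estimate is not justified, so the proof has a genuine gap both in the iteration arithmetic and in the adaptedness of the objects fed into Theorem \ref{thm: harnack}.
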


\begin{remark}
One advantage of the present setting with very mild assumptions is that the results trivially extend to quasilinear equations, that is, when  $Lu$ and $M^ku$ are replaced by  $\tilde{L}u=\D_i(a^{ij}(u)\D_ju)$, and $\tilde{M}^ku=\sigma^{ik}(u)\D_iu$  where the functions $a^{ij}(\cdot)$, $|\sigma^{i\cdot}(\cdot)|_{l_2}$ are bounded and $(\alpha^{ij}(\cdot))_{i,j=1}^d=(2\alpha^{ij}(\cdot)-\sigma^{ik}(\cdot)\sigma^{jk}(\cdot))_{i,j=1}^d$ takes values in the set $\{(\beta^{ij})_{i,j=1}^d:\forall z\in\R^d,\beta^{ij}z_iz_j\geq\lambda|z|^2\}$ for some $\lambda>0$. 
\end{remark}
\begin{remark}We only consider equations with higher order terms,  in the same spirit as in e.g. \cite{MOS1}. This reason for this is to focus on the stochastic aspect of the problem, the lower order terms with measurable and appropriately bounded (i.e. in a subcritical norm) coefficients can be easily treated, as exposed in detail and in great generality in \cite{LA}.
\end{remark}

\section{Preliminaries}

The first three lemmas might be considered standard in the context of stochastic processes and parabolic PDEs, respectively. For the sake of completeness we provide short proofs.
\begin{lemma}\label{lem: martingale 2}
Let $T>0$ and let $(m_t)_{t\in[0,T]}$ be a continuous local martingale starting from 0. Then for any  $\alpha>0$, and $\varkappa>0 $
\begin{equation*}
P(\inf_{t\in[0,T]}m_t\geq -\alpha, \sup_{t\in[0,T]}m_t\geq \varkappa\alpha)\leq \frac{1}{\varkappa+1}.
\end{equation*}
\end{lemma}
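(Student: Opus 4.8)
\emph{Proof proposal.} The plan is to recognise the estimate as a disguised form of Doob's maximal inequality for nonnegative supermartingales, applied to a suitably shifted and stopped version of $m$.

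First I would fix a level $\beta>\alpha$ and set $\tau=\inf\{t\geq 0:\, m_t=-\beta\}$, with the convention $\inf\emptyset=+\infty$. Since $m$ is continuous and starts from $0>-\beta$, the stopped process $N_t:=m_{t\wedge\tau}+\beta$ is nonnegative for every $t$: before $\tau$ we have $m_t>-\beta$ by continuity, and for $t\geq\tau$ the process is frozen at $N_\tau=0$. Thus $N$ is a nonnegative continuous local martingale, hence a supermartingale, with $N_0=\beta$. The second ingredient is the maximal inequality $P(\sup_{t\geq 0}N_t>\mu)\leq\beta/\mu$ for $\mu>0$; I would include its short proof, stopping $N$ at $\inf\{t:\,N_t\geq\mu\}$ and using $N\geq 0$ together with optional stopping.

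The final step is to relate the event in question to a level set of $\sup_{t\geq0} N_t$. On the event $A:=\{\inf_{t\in[0,T]}m_t\geq-\alpha,\ \sup_{t\in[0,T]}m_t\geq\varkappa\alpha\}$ the path of $m$ stays strictly above $-\beta$ throughout $[0,T]$ (because $-\alpha>-\beta$), so $\tau>T$ and hence $N_t=m_t+\beta$ for all $t\in[0,T]$. Consequently $\sup_{t\geq 0}N_t\geq\sup_{t\in[0,T]}m_t+\beta\geq\varkappa\alpha+\beta>(\varkappa+1)\alpha$, so that $A\subseteq\{\sup_{t\geq 0}N_t>(\varkappa+1)\alpha\}$ and therefore $P(A)\leq\beta/((\varkappa+1)\alpha)$. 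Since the left-hand side does not depend on $\beta$, letting $\beta\downarrow\alpha$ yields $P(A)\leq 1/(\varkappa+1)$, as claimed.

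There is no serious obstacle here: the content is exactly the supermartingale maximal inequality, and continuity of $m$ is what makes the reduction work (it guarantees $N\geq 0$ and lets us identify $\sup_{t\geq0}N_t$ with the running supremum of $m$ on $[0,T]$ on the event $A$). The only mild subtlety is the boundary case $\inf_{t\in[0,T]}m_t=-\alpha$: localising at the strictly lower barrier $-\beta$ rather than at $-\alpha$ itself, and then sending $\beta\downarrow\alpha$, avoids the difficulty that freezing $m+\alpha$ at its first hitting time of $-\alpha$ could break the inclusion $A\subseteq\{\sup_{t\geq0}N_t\geq(\varkappa+1)\alpha\}$ when $m$ touches $-\alpha$ and only afterwards rises to $\varkappa\alpha$.
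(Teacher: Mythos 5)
Your proof is correct, but it takes a different route from the paper. The paper invokes the Dambis--Dubins--Schwarz theorem to write $m_t=B_{\langle m\rangle_t}$ for a Brownian motion $B$ (enlarging the probability space if necessary), lets $\tau$ be the first exit time of $B$ from $(-\alpha,\varkappa\alpha)$, and reads off the bound $1/(\varkappa+1)$ as the gambler's-ruin probability $P(B_\tau=\varkappa\alpha)$ obtained from optional stopping. You instead stop $m$ at the first hitting time of the barrier $-\beta$ with $\beta>\alpha$, observe that the shifted stopped process $N=m_{\cdot\wedge\tau}+\beta$ is a nonnegative continuous local martingale and hence a supermartingale, apply the maximal inequality $P(\sup N>\mu)\leq N_0/\mu$, and let $\beta\downarrow\alpha$. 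Both arguments are complete and elementary; yours has the advantage of avoiding the time change entirely (no auxiliary Brownian motion or space enlargement is needed, and nothing special happens when $\langle m\rangle$ is degenerate), and the strict barrier $-\beta<-\alpha$ together with the limit $\beta\downarrow\alpha$ disposes cleanly of the boundary case where $m$ touches $-\alpha$ before rising to $\varkappa\alpha$ --- a case the paper's inclusion into $\{B_\tau=\varkappa\alpha\}$ glosses over (harmlessly, since for Brownian motion that event is null by the strong Markov property). What the paper's route buys in exchange is that the exit-time computation yields the constant $1/(\varkappa+1)$ in one line as an exact two-sided ruin probability, whereas you recover it only after the limiting argument in $\beta$.
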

\begin{proof}
Without loss of generality, we can assume that our probability space can support a Wiener process  $B$ for which $B_{\langle m\rangle_t}=m_t$. Then, defining $\tau$ to be the first exit time of $B$ from  the set $(-\alpha,\varkappa\alpha)$, we have
$$
P(\inf_{t\in[0,T]}m_t\geq -\alpha, \sup_{t\in [0,T]} m_t\geq \varkappa\alpha)\leq P(B_{\tau}=\varkappa\alpha)=\frac{1}{\varkappa+1},
$$
where the equality follows from a simple application of the optional stopping theorem.
\end{proof}
\begin{lemma} \label{lem: martingale}
For any $c>0$, any continuous local martingale $m_t$ starting from 0, and any $N>0$, 
$$
P\left(\sup_{t \geq 0} (m_t-c \langle m \rangle_t )> N \right) \leq e^{-2Nc}.
$$
\end{lemma}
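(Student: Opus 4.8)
The plan is to exploit the exponential supermartingale attached to $m$ with a well-chosen parameter. I would set
$$
\cE_t := \exp\!\big(2c\, m_t - 2c^2 \langle m\rangle_t\big), \qquad t\geq 0 ,
$$
and note that by It\^o's formula $d\cE_t = 2c\,\cE_t\, dm_t$, so that $\cE$ is a nonnegative continuous local martingale with $\cE_0 = 1$; being nonnegative, it is automatically a supermartingale, and no integrability condition (of Novikov type) is needed since only the supermartingale property will be used. The multiplier $2c$ is chosen precisely so that the exponent factors as $2c\,(m_t - c\langle m\rangle_t)$, which gives the pointwise identity $m_t - c\langle m\rangle_t = (2c)^{-1}\log \cE_t$, valid because $\cE_t>0$ for every finite $t$ (and $\langle m\rangle_t<\infty$ for all finite $t$).

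From this identity I would rewrite the event in question: since $m$ is continuous,
$$
\Big\{\sup_{t\geq0}\big(m_t - c\langle m\rangle_t\big) > N\Big\} = \Big\{\sup_{t\geq0}\cE_t > e^{2Nc}\Big\},
$$
both suprema being attained along a countable dense set of times, so there is no measurability issue. It then remains to estimate the right-hand probability. For each fixed $T>0$, Doob's maximal inequality for the nonnegative supermartingale $(\cE_t)_{t\in[0,T]}$ yields
$$
P\Big(\sup_{t\in[0,T]}\cE_t > e^{2Nc}\Big)\leq e^{-2Nc}\,\E\cE_0 = e^{-2Nc},
$$
and letting $T\to\infty$, using that the events on the left increase to $\{\sup_{t\geq0}\cE_t > e^{2Nc}\}$, gives $P(\sup_{t\geq0}\cE_t > e^{2Nc})\leq e^{-2Nc}$, which is the assertion.

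I do not expect a genuine obstacle here: it is a one-line application of a standard tool once the correct multiplier is identified. The only two points worth a remark are the automatic passage from ``nonnegative local martingale'' to ``supermartingale'' and the extension of the maximal inequality from a finite to an infinite time horizon, both of which are routine. One could alternatively run the Dambis--Dubins--Schwarz time change as in the proof of Lemma \ref{lem: martingale 2}, reducing the claim to a one-sided estimate for $B_s - c s$ with $B$ a Brownian motion, but the supermartingale route is shorter.
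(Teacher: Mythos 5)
Your proof is correct, but it takes a genuinely different route from the paper's. The paper reduces to the Brownian case via the Dambis--Dubins--Schwarz time change (writing $m_t=B_{\langle m\rangle_t}$, so that $\sup_t(m_t-c\langle m\rangle_t)\leq\sup_s(B_s-cs)$) and then cites the Borodin--Salminen handbook for the fact that $\sup_{s\geq0}(B_s-cs)$ is exactly exponentially distributed with parameter $2c$. You instead use the exponential local martingale $\cE_t=\exp(2c\,m_t-2c^2\langle m\rangle_t)$, observe it is a nonnegative supermartingale started at $1$ (Fatou, no Novikov condition needed), and apply Doob's maximal inequality, letting the horizon tend to infinity. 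Both arguments are complete; every step of yours checks out, including the choice of multiplier $2c$, the identification of the events, and the passage to the infinite horizon. What each buys: your argument is self-contained and avoids both the time-change (which requires a possible enlargement of the probability space, as the paper tacitly invokes in Lemma \ref{lem: martingale 2}) and the external reference; the paper's route, on the other hand, yields the exact law of the supremum in the Brownian case and hence shows the constant $e^{-2Nc}$ is sharp when $\langle m\rangle_\infty=\infty$. For the purposes of this paper only the upper bound is used, so your shorter supermartingale argument would serve equally well.
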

\begin{proof}
As before, it is not a loss of generality to assume $m_t=B_t$, where $B$ is a Wiener process. By Part II, 2.0.2.(1) in \cite{BS}, $\sup_{t \geq 0} (B_t-c t )$ has exponential distribution with parameter $2c$, which proves the claim.
\end{proof}

\begin{lemma}\label{lemma:bound from below}
Suppose that $u\in L_2([0,4],H^1(B_2))\cap L_{\infty}([0,4], L_2(B_2))$. Let  $J \subset [0,4]$ be a subinterval, $Q=B_\rho$ for some $0< \rho< 2$, $\varphi\in C^{\infty}_c(Q)$, and $\alpha>\beta\geq0$. Then
$$
\|(u-\alpha)^+\varphi\|_{2,J \times Q} ^2
\leq C(|\varphi|_{\infty}^2+|\nabla\varphi|_{\infty}^2)\left[\frac{\|(u-\beta)^+\|_{2,J\times Q} }{\alpha-\beta}\right]^{\frac{4}{d+2}}
$$
$$
\times \left[\sup_{t\in J}|(u-\alpha)^+|_{2,Q}^2+\|I_{u>\alpha}\nabla u\|_{2,J \times Q} ^2\right],
$$
with $C=C(d)$.
\end{lemma}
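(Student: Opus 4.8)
The plan is to run the classical parabolic De Giorgi estimate for a fixed $\omega$ in the full-probability set where $u\in L_2([0,4],H^1(B_2))$: I would gain integrability by combining the Gagliardo--Nirenberg--Sobolev inequality in space with the $L_\infty$-in-time, $L_2$-in-space control of $u$, and then trade that extra integrability back for an $L_2$ bound via H\"older's inequality on the super-level set $\{u>\alpha\}$, whose measure is controlled by $(\alpha-\beta)^{-2}\|(u-\beta)^+\|_{2,J\times Q}^2$. Throughout, set $v=(u-\alpha)^+$, $w=v\varphi$, $\Phi=|\varphi|_\infty^2+|\nabla\varphi|_\infty^2$, $q=2+\tfrac4d=\tfrac{2(d+2)}{d}$, and write $E$ for the bracketed quantity $\sup_{t\in J}|(u_t-\alpha)^+|_{2,Q}^2+\|I_{u>\alpha}\nabla u\|_{2,J\times Q}^2$ on the right-hand side.

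The first step is the Sobolev input. Since $\varphi\in C_c^\infty(Q)$ and truncation preserves the Sobolev class, $w_t\in H^1_0(Q)$ for a.e. $t\in J$, so extending by zero we may regard $w_t\in H^1(\R^d)$ and use the Sobolev inequality on all of $\R^d$, with a constant depending only on $d$ and not on $Q$. For $d\geq3$ I would interpolate $L_q$ between $L_2$ and $L_{2d/(d-2)}$ and use $\|w_t\|_{2d/(d-2)}\leq C|\nabla w_t|_{2}$; for $d=1,2$ I would instead use the endpoint inequalities $|w_t|_\infty^2\leq|w_t|_2|\nabla w_t|_2$ and $|w_t|_4^2\leq C|w_t|_2|\nabla w_t|_2$. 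In all cases one arrives, for a.e. $t$, at
$$|w_t|_{q,Q}^q\leq C(d)\,|w_t|_{2,Q}^{4/d}\,|\nabla w_t|_{2,Q}^2,$$
and then, bounding $|w_t|_{2,Q}^{4/d}\leq(\sup_{s\in J}|w_s|_{2,Q}^2)^{2/d}$ and integrating over $t\in J$,
$$\|w\|_{q,J\times Q}^q\leq C\Big(\sup_{t\in J}|w_t|_{2,Q}^2\Big)^{2/d}\,\|\nabla w\|_{2,J\times Q}^2.$$

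Next I would estimate the two factors. Clearly $\sup_{t\in J}|w_t|_{2,Q}^2\leq|\varphi|_\infty^2\sup_{t\in J}|(u_t-\alpha)^+|_{2,Q}^2\leq\Phi E$. For the gradient factor, write $\nabla w=\varphi\nabla v+v\nabla\varphi$, use the chain rule $\nabla v=I_{u>\alpha}\nabla u$, and bound $\|(u-\alpha)^+\|_{2,J\times Q}^2\leq|J|\sup_{t\in J}|(u_t-\alpha)^+|_{2,Q}^2\leq4\sup_{t\in J}|(u_t-\alpha)^+|_{2,Q}^2$ since $J\subset[0,4]$; this gives $\|\nabla w\|_{2,J\times Q}^2\leq C\Phi E$. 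Combining, $\|w\|_{q,J\times Q}^q\leq C(\Phi E)^{2/d}\Phi E=C\Phi^{1+2/d}E^{1+2/d}$, and since $q=2(1+2/d)$, raising to the power $2/q$ gives $\|w\|_{q,J\times Q}^2\leq C\Phi E$. The final step is H\"older with conjugate exponents $\tfrac{d+2}{d}$ and $\tfrac{d+2}{2}$ applied to $\int_{J\times Q}w^2=\int_{J\times Q}w^2 I_{v>0}$, yielding $\|w\|_{2,J\times Q}^2\leq\|w\|_{q,J\times Q}^2\,|\{(t,x)\in J\times Q:u_t(x)>\alpha\}|^{2/(d+2)}$; on $\{u>\alpha\}$ one has $(u-\beta)^+=u-\beta>\alpha-\beta>0$, so $|\{u>\alpha\}|\leq(\alpha-\beta)^{-2}\|(u-\beta)^+\|_{2,J\times Q}^2$, and plugging this in together with $\|w\|_{q,J\times Q}^2\leq C\Phi E$ produces exactly the asserted bound.

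I do not expect a genuine analytic obstacle here: the argument is a routine adaptation of the deterministic parabolic De Giorgi lemma, and $\omega$ enters only as a fixed parameter. The two points that need a word of care are the verification of the Gagliardo--Nirenberg step in the low dimensions $d=1,2$, and the (standard but essential) observation that multiplying by the cutoff $\varphi$ turns $v$ into a globally $H^1$ function, which is what legitimizes the use of a Sobolev constant independent of the domain $Q$.
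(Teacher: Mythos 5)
Your proof is correct and follows essentially the same route as the paper: H\"older's inequality to trade the $L_{2(d+2)/d}$-integrability of $(u-\alpha)^+\varphi$ against the measure of the level set $\{u>\alpha\}$, the Chebyshev-type bound $I_{u>\alpha}\leq(u-\beta)^+/(\alpha-\beta)$, and the parabolic embedding of $L_\infty(L_2)\cap L_2(H^1_0)$ into $L_{2(d+2)/d}$. The only difference is that you derive that embedding directly from Gagliardo--Nirenberg (including the low-dimensional cases), whereas the paper simply cites it from Moser.
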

\begin{proof}By H\"older's inequality,
$$
\|(u-\alpha)^+\varphi\|_{2,J \times Q} ^2\leq\|(u-\alpha)^+\varphi\|_{2(d+2)/d,J \times Q}^2\|I_{u>\alpha}\|_{2,J \times Q} ^{4/d+2}.
$$
Noticing that
$$
I_{u>\alpha}\leq\frac{(u-\beta)^+}{\alpha-\beta},
$$
and using the embedding inequality 
$$
\|v\|_{2(d+2)/d,G}^2\leq C(d)\left(\sup_{t\in [0,4]}|v|_{2,B_2}^2+\|\nabla v\|_{2,G}^2\right)
$$
for $v\in L_2([0,4],H^1_0(B_2))\cap L_{\infty}([0,4], L_2(B_2))$ (see, e.g. Lemma 3.2, \cite{MOS1}), applied to the function $(u-\alpha)^+I_J\varphi$,  we get the required inequality. 
\end{proof}

Finally, let us formulate the version of It\^o's formula we will use later. We denote by $\mathcal{D}$ the set of twice continuously differentiable functions $f$ from $\R$ to $\R$, such that $f''$ is bounded. Notice that if $f \in \mathcal{D}$, then there exists a constant $\hat{K}$ such that for all $r \in \R$
$$
|f(r)| \leq \hat{K}(1+|r|^2), \ |f'(r)| \leq \hat{K}(1+|r|).
$$

\begin{lemma}                    \label{lem: Ito formula L2}
Let $u$ satisfy \eqref{SPDE}, and let $g \in \mathcal{D}$, $\varphi \in C^\infty_c(B_2)$, and $\psi \in C^{\infty}[0,4]$. Then almost surely,
$$
\int_{B_2} \varphi^2 \psi^2_t g(u_{t}) dx =\int_{B_2} \varphi^2 \psi^2_0 g(u_0) dx+\int_0^{t}\int_{B_2} 2\psi_s \psi'_s  \varphi^2 g(u_s) dx ds
$$
$$
-\int_0^{t} \int_{B_2} 2\psi^2_s \varphi\D_j\varphi g'(u_s) a^{ij}_s \D_i u_s  dx ds + \int_0^{t} \int_{B_2} \psi^2_s \varphi^2 g'(u_s) \sigma^{ik}\D_iu_s dw^k_s
$$
\begin{equation}                    \label{eq:Ito formula}
-\int_0^{t} \int_{B_2}  \psi^2_s \varphi^2 g''(u_s)[ a^{ij}_s \D_iu_s\D_ju_s - \frac{1}{2}\sigma^{ik}\sigma^{jk} \D_iu_s\D_ju_s] dx ds,
\end{equation}
for all $t \in [0,4] $.

\end{lemma}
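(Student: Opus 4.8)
The plan is to establish \eqref{eq:Ito formula} by spatially mollifying $u$, applying the classical (finite-dimensional) It\^o formula pointwise in $x$, and passing to the limit; alternatively one could deduce it from known It\^o formulas for $W^1_p$-valued processes in the variational framework. Fix $\varphi\in C^\infty_c(B_2)$, choose $\delta>0$ with $\mathrm{supp}\,\varphi\subset B_{2-\delta}$, let $\rho_\epsilon$ be a standard mollifier on $\R^d$ with $\epsilon<\delta$, and for $x\in B_{2-\delta}$ set $u^\epsilon_t(x):=(u_t,\rho_\epsilon(x-\cdot))$. Testing the definition of a solution against $\phi=\rho_\epsilon(x-\cdot)$ and using $\partial_{y_j}\rho_\epsilon(x-y)=-(\partial_j\rho_\epsilon)(x-y)$ shows that, for each such $x$, $t\mapsto u^\epsilon_t(x)$ is a continuous real-valued semimartingale with
$$
du^\epsilon_t(x)=\partial_j H^{\epsilon,j}_t(x)\,dt+g^{\epsilon,k}_t(x)\,dw^k_t ,
$$
where $H^{\epsilon,j}_t:=(a^{ij}_t\partial_i u_t)\ast\rho_\epsilon$ and $g^{\epsilon,k}_t:=(\sigma^{ik}_t\partial_i u_t)\ast\rho_\epsilon$ are smooth in $x$, predictable, and, for each fixed $\epsilon$, square integrable in $(t,x)$ on $[0,4]\times B_{2-\delta}$ with probability one (mollification causes no boundary problems since $\mathrm{supp}\,\varphi$ is away from $\partial B_2$).

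Next, apply the classical It\^o formula to $\psi^2_t g(u^\epsilon_t(x))$ — first for a finite truncation of $\{w^k\}$, then letting the number of driving Wiener processes tend to infinity — multiply by $\varphi^2(x)$, integrate in $x$ over $B_2$, and interchange the $dx$-integral with the $dt$- and $dw^k$-integrals by the (stochastic) Fubini theorem; for fixed $\epsilon$ this is legitimate by the integrability just noted, after localizing by $\tau_n:=\inf\{t:|u_t|^2_{2,B_2}+\|\nabla u\|^2_{2,[0,t]\times B_2}>n\}\wedge 4$. This yields \eqref{eq:Ito formula} with $u$ replaced by $u^\epsilon$, except that the third term on the right-hand side reads $\int_0^t\int_{B_2}\varphi^2\psi^2_s g'(u^\epsilon_s)\,\partial_j H^{\epsilon,j}_s\,dx\,ds$ and the last bracket is replaced by $\tfrac12\varphi^2\psi^2_s g''(u^\epsilon_s)|g^{\epsilon,\cdot}_s|^2_{l_2}$. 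Since $\varphi$ has compact support, an integration by parts rewrites the third term as $-\int_0^t\int_{B_2}\big(2\varphi\partial_j\varphi\,\psi^2_s g'(u^\epsilon_s)+\varphi^2\psi^2_s g''(u^\epsilon_s)\partial_j u^\epsilon_s\big)H^{\epsilon,j}_s\,dx\,ds$.

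It remains to let $\epsilon\to0$. One uses that $u^\epsilon_t\to u_t$ in $L_2(B_{2-\delta})$ uniformly in $t$ (the set $\{u_t:t\in[0,4]\}$ being compact in $L_2(B_2)$ by strong continuity), with $|u^\epsilon_t|_{2}\le\sup_t|u_t|_{2,B_2}$; that $\nabla u^\epsilon\to\nabla u$, $H^{\epsilon,j}\to a^{ij}\partial_i u$ and $g^{\epsilon,k}\to\sigma^{ik}\partial_i u$ in $L_2([0,4]\times B_{2-\delta})$; and that, for $g\in\mathcal D$, $|g(r)|\le\hat K(1+r^2)$, $|g'(r)|\le\hat K(1+|r|)$, $g'$ is Lipschitz, and $g''$ is bounded and continuous. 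Combining these gives $L_2$-convergence of $g'(u^\epsilon)$ and (by passing to an a.e.\ convergent subsequence of $u^\epsilon$ and using dominated convergence) of $g''(u^\epsilon)\nabla u^\epsilon$; hence $L_1$-convergence of the products in the drift and correction terms, and $L_2(\Omega\times[0,4];l_2)$-convergence of the stochastic integrands on each $[0,\tau_n]$, hence convergence in probability of the It\^o integrals. The deterministic time–space integrals then converge by dominated convergence, and for fixed $t$ the term $\int_{B_2}\varphi^2\psi^2_t g(u^\epsilon_t)\,dx$ converges by the uniform $L_2$-convergence of $u^\epsilon_t$ and the quadratic growth of $g$. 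This proves \eqref{eq:Ito formula} for each fixed $t$ almost surely; since both sides are almost surely continuous in $t$ (again by strong $L_2$-continuity of $u$ and the growth bounds on $g$), the identity holds for all $t\in[0,4]$ simultaneously outside one null set, and $n\to\infty$ removes the localization.

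The main obstacle — and the reason the lemma is not quite immediate — is the drift term: since $L_s u_s=\partial_j(a^{ij}_s\partial_i u_s)$ lies only in $H^{-1}(B_2)$, one cannot pass to the limit directly in $\int_{B_2}\varphi^2\psi^2_s g'(u^\epsilon_s)\partial_j H^{\epsilon,j}_s\,dx$. The integration by parts above — which uses the weight $\varphi$ to eliminate the boundary term and the fact that $g''(u^\epsilon)\nabla u^\epsilon\in L_2$ — is what makes the passage possible, and it simultaneously produces the term $-\int_0^t\int_{B_2}\psi^2_s\varphi^2 g''(u_s)a^{ij}_s\partial_i u_s\partial_j u_s\,dx\,ds$ appearing in \eqref{eq:Ito formula}, while the remaining half of the It\^o correction comes from $|g^{\epsilon,\cdot}_s|^2_{l_2}\to\sigma^{ik}_s\sigma^{jk}_s\partial_i u_s\partial_j u_s$. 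Everything else is routine bookkeeping.
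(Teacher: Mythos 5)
Your proposal is correct and follows essentially the same route as the paper: spatial mollification of $u$, the classical It\^o formula applied pointwise in $x$ to $\psi_t^2 g(u_t^\epsilon(x))$, Fubini and stochastic Fubini to integrate against $\varphi^2$, integration by parts in the drift term (using the compact support of $\varphi$), and passage to the limit $\epsilon\to 0$ for fixed $t$ followed by continuity in $t$ to get the identity for all $t$ simultaneously. Your write-up supplies more detail on the limit passage (localization, uniform $L_2$-convergence of $u_t^\epsilon$, the growth bounds on $g$, $g'$, $g''$) than the paper does, but the argument is the same.
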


\begin{proof}
Let $\kappa$ be nonnegative a $C^{\infty}$ function on $\R^d$, bounded by $1$, supported on $\{|x|<1\}$, and having unit integral. 
We denote $\kappa_\varepsilon(x)=\varepsilon^{-d}\kappa(x/\varepsilon)$, for $\varepsilon>0$ and for $v \in L_2(B_2)$ we write
$$
v^\varepsilon(x)=(v)^\varepsilon(x)=\int_{B_2} \kappa_\varepsilon( x-y) v(y) \,dy, \  \text{for} \ x\in\bR^d.
$$

Let us choose $\varepsilon>0$ small enough such that $\varphi$ is supported in $B_{2-\varepsilon}$. Then for $x \in B_{2-\varepsilon}$ we have 
$$
u^\varepsilon_{t}(x)=u^\varepsilon_0(x)+\int_0^{t} (a^{ij}_s\D_j u_s, \D_i\kappa_\varepsilon(x- \cdot )) dt+\int_0^{t} ( \sigma^{ik}_s\D_i u_s)^\varepsilon(x) dw^k_s.
$$
Then one can write It\^o's formula for the processes $\varphi^2(x)\psi_t^2g(u^\varepsilon_t(x))$ for $x\in B_{2}$, use Fubini and stochastic Fubini theorems (for the latter, see  \cite{KrylovITO}), and integrate by parts to obtain that almost surely, 
$$
\int_{B_2} \varphi^2 \psi^2_{t} g(u_{t}^{\varepsilon}) dx =\int_{B_2} \varphi^2 \psi_0 g(u^{\varepsilon}_0) dx+\int_0^t\int_{B_2} 2\psi_s \psi'_s  \varphi^2  g(u^{\varepsilon}_s) dx ds
$$
$$
-\int_0^t \int_{B_2} 2\psi^2_s \varphi\D_j\varphi g'(u^{\varepsilon}_s) ( a^{ij}_s \D_i u_s )^\varepsilon dx ds + \int_0^t \int_{B_2} \psi^2_s \varphi^2 g'(u^{\varepsilon}_s)( \sigma^{ik}_s\D_i u_s)^\varepsilon dxdw^k_s
$$
$$
-\int_0^t \int_{B_2} \psi^2_s \varphi^2 g''(u^{\varepsilon}_s)[ ( a^{ij}_s \D_ju_s)^\varepsilon \D_iu^{\varepsilon}_s - \frac{1}{2}( \sigma^{ik}_s\D_i u_s)^\varepsilon( \sigma^{jk}_s\D_j u_s)^\varepsilon  ]dx ds,
$$
for all $t\in[0,4] $. Then for fixed $t$ one lets $\varepsilon \to 0$ to obtain that \eqref{eq:Ito formula} holds almost surely, and the result follows since both sides of \eqref{eq:Ito formula} are continuous in $t$.
\end{proof}

\begin{lemma}\label{lem:Ito}
Let $u$ satisfy \eqref{SPDE}, and let $g \in \mathcal{C}$,  $\varphi \in C^\infty_c(B_2)$, and $\psi \in C^{\infty}[0,4]$.  Set $v_t= (g(u_t))^+$. Then $ v \in \mathcal{H}$, and almost surely,
\begin{align}
\nonumber
|\varphi \psi_{t}  v_t |_2^2 &=|\varphi \psi_0 v_0|_2^2+ \int_0^{t} \int_{B_2} 2\psi^2_s \varphi^2 v_s  \sigma^{ik}\D_iv_s dx dw^k_s +\int_0^{t}\int_{B_2}2 \psi_s \psi'_s \varphi^2  v_s ^2 dx ds \\ \nonumber
&-\int_0^{t} \int_{B_2}  \psi^2_s \varphi^2 [ 2a^{ij}_s \D_iv_s\D_jv_s - \sigma^{ik}\sigma^{jk} \D_iv_s\D_jv_s] dx ds \\ \nonumber
& -\int_0^{t} \int_{B_2}  \psi^2_s \varphi^2 v_sg''(u_s)[ 2a^{ij}_s \D_iu_s\D_ju_s - \sigma^{ik}\sigma^{jk} \D_iu_s\D_ju_s] dx ds \\            \label{eq:Ito positive part}
&-\int_0^{t} \int_{B_2} 4\psi^2_s \varphi\D_j\varphi v_s  a^{ij}_s \D_i v_s   dx ds,
\end{align} 
for all $t \in [0,4] $. 
\end{lemma}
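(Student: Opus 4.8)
The plan is to obtain \eqref{eq:Ito positive part} from Lemma~\ref{lem: Ito formula L2}, applied to smooth approximations of the single function $F(r):=\big((g(r))^+\big)^2$; note that $|\varphi\psi_t v_t|_2^2=\int_{B_2}\varphi^2\psi_t^2F(u_t)\,dx$. First, $v\in\mathcal H$: since $g'$ is bounded, $g$ is globally Lipschitz, so $g(u_t)\in H^1(B_2)$ with $\nabla g(u_t)=g'(u_t)\nabla u_t$, hence $v_t=(g(u_t))^+\in H^1(B_2)$ with $\nabla v_t=I_{g(u_t)>0}\,g'(u_t)\nabla u_t$ a.e.; the bounds $|\nabla v_t|\le|g'|_\infty|\nabla u_t|$ and $|v_t|\le|g(0)|+|g'|_\infty|u_t|$ together with $u\in\mathcal H$ give $v\in L_2([0,4],H^1(B_2))$ a.s., strong $L_2(B_2)$-continuity of $t\mapsto v_t$ follows from $|(g(a))^+-(g(b))^+|\le|g'|_\infty|a-b|$, and predictability is inherited from $u$.

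The key analytic observation is that $F\in C^{1,1}(\R)$. Indeed $F'(r)=2(g(r))^+g'(r)$ is a product of locally absolutely continuous functions, hence itself locally absolutely continuous, with $F''(r)=2I_{g(r)>0}(g'(r))^2+2(g(r))^+g''(r)$ a.e., and $|F''|\le 2|g'|_\infty^2+2|gg''|_\infty=:M$ since $(g(r))^+g''(r)=g(r)g''(r)I_{g(r)>0}$. Thus, mollifying, $F_n:=F*\rho_n$ has $F_n\in C^\infty$, $\|F_n''\|_\infty\le M$, so $F_n\in\mathcal D$ with the growth constants in $|F_n(r)|\le\hat K(1+r^2)$, $|F_n'(r)|\le\hat K(1+|r|)$ uniform in $n$; moreover $F_n\to F$ and $F_n'\to F'$ locally uniformly, and $F_n''\to F''$ outside a Lebesgue-null set $Z\subset\R$ (every Lebesgue point of $F''$).

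Apply \eqref{eq:Ito formula} with $g$ replaced by $F_n$ and let $n\to\infty$. For $\int_{B_2}\varphi^2\psi_t^2F_n(u_t)$, $\int_{B_2}\varphi^2\psi_0^2F_n(u_0)$ and $\int_0^t\!\int 2\psi_s\psi_s'\varphi^2F_n(u_s)$, dominated convergence (dominating function $C(1+|u|^2)$, integrable over $B_2$ resp.\ over $[0,t]\times B_2$ by $u\in\mathcal H$) produces the terms with $F(u_\cdot)=v_\cdot^2$. For $-\int_0^t\!\int 2\psi_s^2\varphi\D_j\varphi F_n'(u_s)a_s^{ij}\D_iu_s$, dominated convergence (dominating function $C(1+|u|^2+|\nabla u|^2)$) gives the limit with $F'(u_s)\D_iu_s$, and since $(g(u_s))^+=(g(u_s))^+I_{g(u_s)>0}$ we have $F'(u_s)\D_iu_s=2v_s\D_iv_s$, yielding the last line of \eqref{eq:Ito positive part}. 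The martingale term is handled by the same identity together with
$$\int_0^4\sum_k\Big|\int_{B_2}\psi_s^2\varphi^2\big(F_n'(u_s)-F'(u_s)\big)\sigma_s^{ik}\D_iu_s\,dx\Big|^2ds\to 0$$
(dominated convergence again, $\sigma$ bounded in $l_2$, $\nabla u\in L_2$), whence by Doob's inequality the stochastic integrals converge in probability to $\int_0^t\!\int 2\psi_s^2\varphi^2v_s\sigma^{ik}\D_iv_s\,dx\,dw^k_s$.

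The step I expect to be the main obstacle is the second-order term $T_n:=-\int_0^t\!\int\psi_s^2\varphi^2F_n''(u_s)\big[a_s^{ij}\D_iu_s\D_ju_s-\tfrac12\sigma^{ik}\sigma^{jk}\D_iu_s\D_ju_s\big]\,dx\,ds$, since there $F_n''\to F''$ only a.e., not uniformly. By Assumption~\ref{as0} the bracket equals $\tfrac12(2a^{ij}-\sigma^{ik}\sigma^{jk})\D_iu_s\D_ju_s$, hence is nonnegative and $\le C|\nabla u_s|^2\in L_1([0,t]\times B_2)$ a.s., and $|F_n''|\le M$, so the integrand is dominated by $MC|\nabla u|^2$. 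On $\{(s,x):u_s(x)\in Z\}$ a standard property of Sobolev functions (the coarea formula, using $|Z|=0$) gives $\nabla u_s=0$ a.e., so the integrand vanishes there for every $n$ and in the limit; on the complement $F_n''(u_s)\to F''(u_s)$, so dominated convergence yields $T_n\to-\int_0^t\!\int\psi_s^2\varphi^2F''(u_s)[\cdots]$. Substituting $F''(u_s)=2I_{g(u_s)>0}(g'(u_s))^2+2(g(u_s))^+g''(u_s)$ and $\D_iv_s\D_jv_s=I_{g(u_s)>0}(g'(u_s))^2\D_iu_s\D_ju_s$ splits this into precisely the two bracketed $ds$-terms of \eqref{eq:Ito positive part}. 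Having matched every term, and since both sides of \eqref{eq:Ito positive part} are a.s.\ continuous in $t$, the identity holds a.s.\ for all $t\in[0,4]$.
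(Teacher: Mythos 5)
Your argument is correct and follows the same overall strategy as the paper: approximate the function $r\mapsto\big((g(r))^+\big)^2$ by functions in $\mathcal D$, apply Lemma~\ref{lem: Ito formula L2}, and pass to the limit; the term-by-term identifications (e.g.\ $F'(u)\D_iu=2v\D_iv$ and the splitting of $F''$) all check out. The only genuine difference is the choice of regularization. The paper smooths \emph{before} composing with $g$: it builds explicit approximations $\gamma_\delta\to(r^+)^2$ with $\gamma_\delta''=\alpha_\delta\to 2I_{r>0}$ pointwise \emph{everywhere}, so that $\zeta_\delta''=\alpha_\delta(g)|g'|^2+\beta_\delta(g)g''$ converges pointwise at every $(s,x)$ and the limit in the second-order term needs nothing beyond dominated convergence (plus the standard chain rule $\nabla v=I_{g(u)>0}g'(u)\nabla u$). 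You instead mollify the composite $F=((g)^+)^2$ directly, which costs you the exceptional null set $Z$ where $F_n''\not\to F''$, and you correctly pay for it with the Sobolev fact that $\nabla u=0$ a.e.\ on $u^{-1}(Z)$. Both routes work; the paper's is marginally cleaner in that it avoids invoking that extra null-level-set property, while yours is more canonical (one mollification, no bespoke approximating family). One cosmetic point: the convergence of the stochastic integrals is more naturally phrased via convergence of the quadratic variations to zero in probability (after localization) rather than Doob's inequality, since the relevant quantities are only a.s.\ finite; this does not affect the validity of the proof.
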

\begin{proof}
Since $g$ has bounded first derivative, it follows easily that $v \in \mathcal{H}$. We introduce now the functions $\alpha_\delta(r)$, 
$\beta_\delta(r)$ and $\gamma_\delta(r)$ on 
$\mathbb{R}$, for  $\delta>0$,  given by
\begin{equation}
\alpha_ \delta (r)= \left\{
\begin{array}{rl}
2 & \text{if } r > \delta \\
\frac{2r}{\delta} & \text{if } 0 \leq r \leq \delta \\
0 & \text{if } r < 0,
\end{array} \right. \nonumber
\end{equation}
\[ \beta _\delta (r) = \int_0^r a_\delta (s)ds,
\qquad \gamma_\delta(r)=\int_0^r \beta_\delta(s) ds.\]
For all $r \in \mathbb{R}$ we have $\alpha_\delta(r) \to 2I_{r >0}$, 
$ \beta_\delta(r) \to 2r^+$ 
and $\gamma_\delta(r) \to {(r^+)^2}$ as $\delta \to 0$. 
Also, for all $r,r_1,r_2$ and $\delta$,  
the following inequalities hold
\begin{equation*}                  \label{eq:growth approximation}
|\alpha_\delta(r)|\leq 2, \ |\beta_\delta(r)| 
\leq2 |r|, \ |\gamma_\delta(r)| \leq {r^2}.
\end{equation*}
It follow then that since $g \in \mathcal{C}$, the function $\zeta_\delta(r):=\gamma_\delta ( g (r))$ lies in $\mathcal{D}$. Hence, by virtue of Lemma \ref{lem: Ito formula L2} one can write  It\^o's formula for $|\psi \varphi \sqrt{\zeta}_\delta(u_t)|^2_2$, i.e. \eqref{eq:Ito formula} with $g, g'$ and $g''$ replaced by $ \gamma_\delta(g), \beta_\delta(g)g'$ and $\alpha_\delta(g)|g'|^2+\beta_\delta(g)g''$ respectively. Then we let $\delta \to 0$ to obtain \eqref{eq:Ito positive part}.

\end{proof}

\section{Proofs of the main results}  \label{sec: Proofs}

\begin{proof}[Proof of Theorem \ref{thm: supremum estimate}]
We first prove $(i)$ It is easy to see that it suffices to show the existence of $\hat{\gamma},\delta_1, \delta_2,C>0$ such that
\begin{equation}\label{eq:thmB equiv}
P(\|f(u)^+\|_{\infty,G_1}^2\geq 1,\|f(u)^+\|_{2,G_{3/2}}^2\leq\kappa^{-\delta_1}\hat{\gamma})\leq C\kappa^{-\delta_2},
\end{equation}
since by substituting $\tilde{f}= f(\gamma /\kappa^{\delta_1} \alpha)^{1/2}$ in place of $f$ in \eqref{eq:thmB equiv}, we obtain the desired   inequality with $\delta=\delta_2/ \delta_1$ and $\hat{C}=1/ \hat{\gamma}$.

Let us take $r\in[0,4]$, $\rho \in [1,2]$, $\psi\in C^{\infty}([0,4])$ with $\psi=0$ on $[0,r]$, and $\varphi\in C_c^{\infty}(B_{\rho})$.
For $j=0,1,\ldots$ let $g^j(u):=f(u)-(1-2^{-j})$, $v^j=(g^j(u))^+$, and let us apply Lemma \ref{lem:Ito} with $g^{j+1}$.  Using the parabolicity condition and Young's inequality, as well as the nonnegativity of $v^{j+1}(g^{j+1})''$, we get for any $\varepsilon >0$

$$
\int_{B_{\rho}} \varphi^2\psi_t^2|v_t^{j+1}|^2dx
$$
$$\leq m_t^{j+1}+\int_r^t\int_{B_{\rho}}2\psi_s\psi'_s \varphi^2 |v_s^{j+1}|^2dxds
-\int_r^t\int_{B_{\rho}}\lambda\psi_s^2\varphi^2|\nabla v_s^{j+1}|^2dxds
$$
$$
+\int_r^t\int_{B_{\rho}}[\varepsilon\psi_s^2\varphi^2K^2|\nabla v_s^{j+1}|^2+16/\varepsilon\psi_s^2|\nabla \varphi|^2|v_s^{j+1}|^2]dxds
$$
almost surely for all for $t\in[r,4]$, where 
$$
m_t^{j+1}= \int_r^t \int_{B_{\rho}} 2\varphi^2\psi_s^2v^{j+1}_sM^kv^{j+1}_sdxdw^k_s.
$$
Choosing $\epsilon$ sufficiently small, we arrive at
$$
\int_{B_{\rho}} \varphi^2 \psi^2_t|v_t^{j+1}|^2dx
+\int_{r}^{t}\int_{B_{\rho}}\varphi^2\psi^2|\nabla v^{j+1}_s|^2dxds
$$
\begin{equation}\label{energy0}
\leq C'm^{j+1}_t + C\int_r^t \int_{B_{\rho}}  \varphi^2\psi_s \psi_s'|v_s^{j+1}|^2+|\nabla\varphi|^2\psi_s^2|v_s^{j+1}|^2dxds.
\end{equation}
Now let us choose $r=r_j=3-(5/4)2^{-j}$ and $\rho=\rho_j=1+(1/2)2^{-j}$, that is, $[r_0,4]\times B_{\rho_0}=G_{3/2}$. Also we introduce the notation $F_j=[r_{j},4] \times B_{\rho_j}$. Furthermore, choose $\psi=\psi^j$ and $\varphi=\varphi^j$ such that
\begin{enumerate}[(i)]
\item $0\leq \psi^j\leq1$, $\psi^j|_{[0,r_j]}=0,\; \psi^j|_{[r_{j+1},4]}=1$;
\item $0 \leq \varphi^j\leq1$, $\varphi^j\in C_0^{\infty}(B_{\rho_j}),\; \varphi^j|_{B_{\rho_{j+1}}}=1$;
\item $|\D_t\psi^j|+|\nabla\varphi^j|^2< C 4^j$.
\end{enumerate}
Then by running $t$ over $[r_j,4]$, by \eqref{energy0} we obtain
\begin{equation}\label{energy1}
\sup_{t\in [r_{j+1},4]}|v_t^{j+1}|_{2,B_{\rho_{j+1}}}^2+\|\nabla v^{j+1}\|_{2,F_{j+1}}^2\leq C 4^j \|v^{j+1}\|_{2,F_j}^2+C\sup_{t\in[r_j,4]}m^{j+1}_t.
\end{equation}
Notice that, since the left-hand side of \eqref{energy0} is nonnegative, running $t$ over $I_j$ gives
\begin{equation}\label{eq:infimum}
\inf_{t\in[r_j,4]}m^{j+1}_t\geq -C4^j \|v^{j+1}\|_{2,F_j}^2.
\end{equation}
Applying Lemma \ref{lemma:bound from below} with $\alpha=1-2^{-(j+1)}$, $\beta=1-2^{-j}$, and $\varphi=\varphi^{j+1}$, we get
\begin{align*}
\|v^{j+1}\|_{2,F_{j+2}}&\leq\|\varphi^{j+1}v^{j+1}\|_{2,F_{j+1}}
\\&
\leq C^j\|v^j\|_{2,F_{j+1}}^{4/d+2}
\left[\sup_{t\in [r_{j+1},4]}|v_t^{j+1}|_{2,B_{\rho_{j+1}}}^2+\|\nabla v^{j+1}\|_{2,F_{j+1}}\right].
\end{align*}
Combining this with \eqref{energy1} yields
$$
\|v^{j+1}\|_{2,F_{j+2}}^2\leq C^j\|v^j\|_{2,F_{j+1}}^{4/d+2}\left[ \|v^{j+1}\|_{2,F_j}^2+\sup_{t\in[r_j,4]}m_t^{j+1}\right].
$$
Since for $j>i$, we have $v^j  \leq v^i$ and $F_{j} \subset F_i$,  we obtain for $V_j=\|v^j\|_{2,F_j}^2$
$$
V_{j+2}\leq C^jV_j^{2/(d+2)}\left[4^jV_j+\sup_{t\in[r_j,4]}m_t^{j+1}\right].
$$
Let $\gamma_0,\gamma\in(0,1)$ and suppose that $V_j\leq\gamma_0\gamma^j$ on a set $\Omega_j\subset\Omega$. By \eqref{eq:infimum} we have
$$
\inf_{t\in[r_j,4]}m_t^{j+1}\geq -C4^j \|v^{j+1}\|_{2,F_j}^2\geq- C4^jV_j,
$$
and Lemma \ref{lem: martingale 2} can be applied with $\alpha=C4^j \gamma_0 \gamma ^j$, $\varkappa=\kappa 4^j$. That is we obtain a subset $\Omega_{j+2}\subset\Omega_j$ such that $P(\Omega_j\setminus\Omega_{j+2})\leq  \kappa^{-1}4^{-j}$ and on $\Omega_{j+2}$
$$
\sup_{t\in[r_j,4]}m_t^{j+1}\leq \kappa C 16^j\gamma_0 \gamma^j.
$$
Consequently, on $\Omega_{j+2},$
$$
V_{j+2}\leq C^j\gamma_0^{2/(d+2)} \gamma^{2j/(d+2)} \gamma_0 \gamma^j (1+\kappa)  \leq\gamma_0\gamma^{j+2},
$$
provided that
$$
\gamma=C^{-(d+2)/2},\;\gamma_0\leq(\gamma^2/(1+\kappa))^{(d+2)/2}.
$$
Proceeding iteratively, we can conclude that on $\cap_{j\geq0}\Omega_{2j}$, $V_j\rightarrow0$, and therefore 
$$\|f(u)^+\|_{\infty,G_1}^2\leq 1,$$
and moreover,
$$
P\left(\Omega_0\setminus\cap_{j\geq0}\Omega_{2j}\right)=\sum_{j\geq0}P(\Omega_{2j}\setminus\Omega_{2j+2})
\leq2C\kappa^{-1}.
$$
This proves \eqref{eq:thmB equiv}, since $\Omega_0=\{\|f(u)^+\|_{2,G_{3/2}}^2=V_0\leq\gamma_0=\kappa^{-(d+2)/2}\hat{\gamma}\}$ with a constant $\hat{\gamma}=\hat{\gamma}(d,\lambda,K)$.

For part $(ii)$, we have 
$$
P(\|f(u)\|_{\infty,G_1}^2\geq\hat{C}\kappa\alpha,\|f(u)\|_{2,G_{3/2}}^2\leq\alpha)
$$
$$
\leq P(\|f(u)^+\|_{\infty,G_1}^2\geq\hat{C}\kappa\alpha,\|f(u)^+\|_{2,G_{3/2}}^2\leq\alpha)
$$
$$
+P(\|f(u)^-\|_{\infty,G_1}^2\geq\hat{C}\kappa\alpha,\|f(u)^-\|_{2,G_{3/2}}^2\leq\alpha),
$$
which by virtue of $(i)$ and the fact that $-f$ satisfies the conditions of the lemma, yields (ii).

Note that in case the initial value of $f(u)$ is identically 0, the time-cutoff function $\psi$ in the above argument can be omitted. Doing so and repeating the same steps leads to a proof of (iii)-(iv). 
\end{proof}

Recall that from \cite{ROZ} it is known that solutions of \eqref{SPDE} with 0 boundary and $L_p$ initial conditions are weakly continuous in $L_p$ for any $p\in(0,\infty)$. A simple consequence of Theorem \ref{thm: supremum estimate} is that in fact strong continuity holds, away from $t=0$.

\begin{corollary}\label{cor}
Let $u$ be a solution of \eqref{SPDE} and $p\in(0,\infty)$. Then
\begin{enumerate}[(i)]
\item  $(u_t)_{t\in[3,4]}$ is strongly continuous in $L_p(B_1);$
\item If furthermore $u|_{\partial B_2}=0$, then $(u_t)_{t\in[3,4]}$ is strongly continuous in $L_p(B_2)$.
\end{enumerate}
\end{corollary}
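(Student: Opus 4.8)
The plan is to upgrade the strong $L_2(B_2)$-continuity that is built into membership in $\mathcal{H}$ to strong $L_p$-continuity; the only new ingredient required is an almost sure local bound on $u$, after which interpolation between $L_2$ and $L_\infty$ on a bounded domain finishes the argument.

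First I would establish local boundedness. Applying Theorem \ref{thm: supremum estimate}(ii) with $f(r)=r$ — which lies in $\mathcal{C}$ with $ff''\equiv 0\ge 0$, so that $\|f(u)\|_{\infty,G_1}=\|u\|_{\infty,G_1}$ and $\|f(u)\|_{2,G_{3/2}}=\|u\|_{2,G_{3/2}}$ — gives, for all $\kappa\ge 1$ and $\alpha>0$,
$$
P\big(\|u\|_{\infty,G_1}^2\ge \hat C\kappa\alpha,\ \|u\|_{2,G_{3/2}}^2\le\alpha\big)\le C\kappa^{-\delta}.
$$
Since $u\in\mathcal{H}$ forces $\|u\|_{2,G_{3/2}}<\infty$ a.s., for any $\varepsilon>0$ one may fix $\alpha$ with $P(\|u\|_{2,G_{3/2}}^2>\alpha)<\varepsilon$ and then let $\kappa\to\infty$ to obtain $P(\|u\|_{\infty,G_1}=\infty)\le\varepsilon$; hence $M:=\|u\|_{\infty,G_1}<\infty$ a.s. (recall $G_1=[3,4]\times B_1$). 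The $L_2(B_1)$-continuous representative of $u$ then satisfies $|u_t|_{\infty,B_1}\le M$ for every $t\in[3,4]$: the bound holds for a.e.\ $t$ by Fubini, and it passes to all $t$ by lower semicontinuity of the $L_\infty$-norm along $L_2$-convergent sequences.

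For (i), fix $t_0\in[3,4]$. Interpolating, for a.e.\ $\omega$ and all $t\in[3,4]$,
$$
|u_t-u_{t_0}|_{p,B_1}\le
\begin{cases}
(2M)^{1-2/p}\,|u_t-u_{t_0}|_{2,B_1}^{2/p}, & p\ge 2,\\
|B_1|^{1/p-1/2}\,|u_t-u_{t_0}|_{2,B_1}, & 0<p<2,
\end{cases}
$$
and the right-hand side tends to $0$ as $t\to t_0$ because $t\mapsto u_t$ is continuous into $L_2(B_2)\supset L_2(B_1)$. For (ii) the same scheme works once one knows $\|u\|_{\infty,[3,4]\times B_2}<\infty$ a.s. I would obtain this by rerunning the proof of Theorem \ref{thm: supremum estimate}(ii) with $f(r)=r$ but with the spatial cutoff $\varphi$ omitted: since $u_t\in H^1_0(B_2)$ for a.e.\ $t$, each truncation $v^j=(u-(1-2^{-j}))^+$ also lies in $H^1_0(B_2)$, so the Sobolev-type embedding of Lemma \ref{lemma:bound from below} and the energy estimate \eqref{energy0} are available directly on all of $B_2$ with $\varphi\equiv1$ (this is, in essence, the global boundedness of \cite{MAKO}); with that bound the argument of (i) gives (ii).

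The interpolation inequalities and the $\varphi\equiv 1$ variant needed in (ii) are routine. The only genuinely delicate point is the passage from the tail estimate of Theorem \ref{thm: supremum estimate} to the almost sure finiteness of $\|u\|_{\infty,G_1}$, and then to a bound valid at every $t$ for the continuous representative — which is exactly what makes the pointwise-in-$t$ continuity statement meaningful.
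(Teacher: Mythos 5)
Your proposal is correct and follows essentially the same route as the paper: almost sure finiteness of $\|u\|_{\infty,G_1}$ from Theorem \ref{thm: supremum estimate}, upgrading the essential supremum in time to a bound valid for every $t$ via $L_2$-continuity, and, for part (ii), dropping the spatial cutoff thanks to $u_t\in H^1_0(B_2)$. The only (cosmetic) difference is that you conclude via the interpolation bound $|v|_{p,B_1}\le |v|_{\infty,B_1}^{1-2/p}|v|_{2,B_1}^{2/p}$ where the paper uses dominated convergence along subsequences; both are fine.
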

\begin{proof}
$(i)$  First notice that the supremum in time can be taken to be real (and not only essential) supremum: the function $|(u-\|u\|_{\infty,G_1})^+|_{2,B_1}$ is 0 for almost all $t$, hence by the continuity of $u$ in $L_2$ it is 0 for all $t$, and therefore, for all $t$, almost every $x$, $u_t(x)\leq\|u\|_{\infty,G_1}$. Now fix $t\in[3,4]$, and take a sequence $t_n\rightarrow t$. Then $u_{t_n}\rightarrow u_{t}$ in $L_2(B_2)$, hence for a subsequence $t_{n_k}$, for almost every $x$. On the other hand, $|u_{t_{n_k}}I_{B_1}|\leq \|u\|_{\infty,G_1}<\infty$, therefore by Lebesgue's theorem, $u_{t_{n_k}}\rightarrow u_t$ in $L_p$.

For part $(ii)$, notice that when $u\in H^1_0(B_2)$ for almost all $\omega,t$, then in the special case $f(r)=r$ the space-cutoff function $\varphi$ in the proof of Theorem \ref{thm: supremum estimate} can be omitted. We then obtain that $\|u\|_{\infty,[3,4]\times B_2}<\infty$ with probability 1, and by the same argument as above we get the claim.
\end{proof}

Before turning to the proof of Theorem \ref{thm: harnack}, we need one more lemma, which can be considered as a weak version of Theorem \ref{thm: harnack}.

\begin{lemma}\label{lem: weakweak}
Let $u$ be a solution of \eqref{SPDE}, such that on $A \in \mathscr{F}$, $u\in \Lambda_{\eta}$. Then  for any $N>0$,  there exists a set $D_1 \in \mathscr{F}$, with $P(D_1) \leq  C e^{-cN}$, such that on $A \cap D_1^c$, for all $t\in [0,4],$ 
$$
|\{(x\in B_\rho|\,u(t,x)\geq e^{-N}\}|\geq \eta^3|B_\rho|,
$$
where $\rho$ is defined by
\begin{equation*}\label{rho}
|B_\rho|=(1/(1+\eta)\vee3/4)|B_2|,
\end{equation*}
and the constants $c,C>0$ depend only on $d$, $\lambda$, $K$, and $\eta$.
\end{lemma}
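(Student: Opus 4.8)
The plan is to track how the super-level set $\{u_t \geq \theta\}$ can shrink as time evolves, starting from the hypothesis that $\{u_0 \geq 1\}$ occupies an $\eta$-fraction of $B_2$. The natural tool is the energy inequality for $v = (g(u))^+$ from Lemma \ref{lem:Ito} applied with a logarithmic-type test function, which is the classical device in the deterministic proof of Moser/Krylov for propagating positivity of the measure of the positivity set. Concretely, I would take $g(r) = (-\log r + N)^+$ or a smooth truncation thereof (belonging to $\mathcal{C}$ after appropriate modification near $0$), so that $v_t$ is large exactly where $u_t$ is small; then an $L_2$-in-space bound on $v_t$ controls the measure of $\{u_t < e^{-N}\}$ via Chebyshev. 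The sign structure here is favorable: $g$ is convex and decreasing on its support, so $g'' \geq 0$ and $g' \leq 0$, and the term $v_s g''(u_s)[2a^{ij}\partial_i u_s\partial_j u_s - \sigma^{ik}\sigma^{jk}\partial_i u_s \partial_j u_s]$ in \eqref{eq:Ito positive part} has a definite sign. Using the parabolicity condition to absorb the gradient terms (as in the proof of Theorem \ref{thm: supremum estimate}, via Young's inequality against the $|\nabla\varphi|^2$ term), one obtains an inequality of the shape
\begin{equation*}
|\varphi \psi_t v_t|_2^2 \leq m_t + C\int_0^t |\varphi\psi_s v_s|_2^2\, ds + C\int_0^t \||\nabla\varphi| v_s\|_2^2\, ds,
\end{equation*}
where $m_t$ is a continuous local martingale. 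The time-cutoff $\psi$ can be dropped because on the event $A$ we already control $v_0$: on $\{u_0 \geq 1\}$ we have $g(u_0) = 0$, so $|\varphi v_0|_2^2 \leq |B_2 \setminus \{u_0\geq 1\}| \cdot \sup g^2 \leq (1-\eta)|B_2| N^2$ on $A$.

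The martingale $m_t$ is where the stochastic difficulty concentrates, and this is the step I expect to be the main obstacle: I cannot get a deterministic $\omega$-uniform bound, so I would instead use Lemma \ref{lem: martingale} (the exponential estimate for $\sup_t(m_t - c\langle m\rangle_t)$) to control $\sup_{t\in[0,4]} m_t$ on a set whose complement has probability $\leq Ce^{-cN}$. For this one needs a bound on the quadratic variation $\langle m\rangle_t = \int_0^t \||\varphi\psi_s v_s \sigma^{ik}\partial_i v_s\|$-type terms, which is itself controlled by $\int_0^t \|\varphi \nabla v_s\|_2^2\, ds$ times a power of $\sup_s \|\varphi v_s\|_\infty$; the $\sup_s\|\varphi v_s\|_\infty$ factor is where one has to be careful, since a priori $v$ is not bounded, but here the truncation of $g$ makes $|g| \leq N$, so $\|v\|_\infty \leq N$ deterministically. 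Thus $\langle m \rangle_t \leq C N^2 \int_0^4 \|\varphi\nabla v_s\|_2^2\, ds$, and the gradient integral is in turn bounded (on the good event) by the left side of the energy inequality, closing the loop. Feeding this into Lemma \ref{lem: martingale} with $c$ chosen so that $c \cdot CN^2$ is a fixed constant (hence $c \sim N^{-2}$, giving $e^{-2Nc \cdot(\text{something})}$—one must check the exponent still beats a fixed negative power, which it does if one instead scales by choosing the threshold proportional to $N$ and $c$ of order one, absorbing the $N^2$ into the constant in the exponent differently; the precise bookkeeping here is the delicate point) produces the claimed $P(D_1) \leq Ce^{-cN}$.

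Once the good event $D_1^c$ is fixed, I would run a Gronwall argument on $\phi(t) := |\varphi v_t|_2^2$: from the energy inequality with $\psi \equiv 1$, on $A \cap D_1^c$,
\begin{equation*}
\phi(t) \leq \phi(0) + CN^2 + C\int_0^t \phi(s)\, ds \leq (1-\eta)|B_2|N^2 + CN^2 + C\int_0^t \phi(s)\, ds,
\end{equation*}
so $\phi(t) \leq C' N^2$ for all $t \in [0,4]$ with $C'$ depending only on $d,\lambda,K,\eta$. Choosing $\varphi$ supported in $B_2$ with $\varphi \equiv 1$ on $B_\rho$ (where $\rho$ is the radius in the statement, so that $|B_\rho|/|B_2|$ is the stated fraction), Chebyshev gives $|\{x \in B_\rho : g(u_t(x)) \geq \theta N\}| \leq C'N^2/(\theta N)^2 = C'/\theta^2$; taking $\theta$ large enough (depending on $\eta$, $d$, $\lambda$, $K$, the constant $C'$) makes this smaller than $(1-\eta^3)|B_\rho|$. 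On the complement, $g(u_t(x)) < \theta N$, i.e. $-\log u_t(x) + N < \theta N$, i.e. $u_t(x) > e^{-(\theta-1)N}$; replacing $N$ by $N/(\theta-1)$ throughout (which only changes the constants $c,C$) then yields exactly $|\{x \in B_\rho : u_t(x) \geq e^{-N}\}| \geq \eta^3 |B_\rho|$ for all $t \in [0,4]$ on $A \cap D_1^c$, as required. The only loose end is checking that $g$, or a $C^2$-mollification of $(-\log r + N)^+$ that agrees with it away from a small neighborhood of the kink and of $r=0$ and is constant for $r$ near $0$, genuinely lies in $\mathcal{C}$ (bounded $g'$ and bounded $g g''$) — truncating $g$ to be constant $= N$ for $r \leq e^{-2N}$ and smoothing both corners achieves this, at the harmless cost of replacing $e^{-N}$ by $e^{-2N}$ and relabeling $N$.
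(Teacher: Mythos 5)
Your overall architecture is the right one and parallels the paper's: a truncated logarithmic weight of $u$, an energy inequality with the Itô correction, the exponential martingale estimate of Lemma \ref{lem: martingale}, and a Chebyshev argument propagating the measure of the level set from $t=0$. But the step you yourself flag as ``the delicate point'' is a genuine gap, and the bookkeeping does not close as written. You bound the quadratic variation by $\langle m\rangle_t\leq CN^2\int_0^t\|\varphi\nabla v_s\|_2^2\,ds$ (using $\|v\|_\infty\leq N$), while the only dissipation available to absorb $c\langle m\rangle_t$ is $\lambda\int_0^t\|\varphi\nabla v_s\|_2^2\,ds$; this forces $c\sim N^{-2}$ in Lemma \ref{lem: martingale}, so to reach probability $Ce^{-cN}$ the threshold must be taken of order $N^3$, whence $\sup_t|\varphi v_t|_2^2\leq CN^3$, Chebyshev at level $\theta N$ gives measure $\leq CN/\theta^2$, and one is forced to take $\theta\sim\sqrt N$. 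After relabeling this yields a lower bound $e^{-N}$ only with probability $Ce^{-cN^{2/3}}$, not $Ce^{-cN}$. The alternative scalings you sketch (``$c$ of order one, absorbing the $N^2$ differently'') fare no better: with $c=O(1)$ the term $c\langle m\rangle_t$ cannot be absorbed at all.

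The missing ingredient is the pointwise inequality $F''\geq(F')^2$ satisfied by the mollified truncated logarithm, which the paper establishes as property (iv) of $F_h$ and which you never invoke (you use only convexity for the sign of the $v_sg''(u_s)[\cdot]$ term). This inequality lets one dominate $|\nabla v|^2=(F'(u))^2|\nabla u|^2$ by $F''(u)|\nabla u|^2$, i.e.\ by exactly the quantity produced with a negative sign by the Itô correction. The paper exploits this in the cleanest way by working with the \emph{first} moment $\int\varphi^2 v_t\,dx$ (via Lemma \ref{lem: Ito formula L2} rather than Lemma \ref{lem:Ito}): there the martingale is $\int_0^t\int\varphi^2\sigma^{ik}\D_i v\,dx\,dw^k$, whose quadratic variation is bounded by $C\int_0^t\int\varphi^2F''(u)|\nabla u|^2$ with an $N$-independent constant, so Lemma \ref{lem: martingale} applies with a fixed $c$ and threshold $N$, giving $P(D_1)\leq e^{-2cN}$ outright; the conclusion then follows from a first-moment Chebyshev using $F_h\geq\log(1/2h)$ on $\{u\leq h/2\}$ and $F_h\leq\log(2/h)$. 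Your $L_2$ version can in fact be salvaged, but only by using $(g')^2\leq g''$ to bound $\langle m\rangle_t\leq CN\int_0^t\int\varphi^2v\,g''(u)|\nabla u|^2$ and absorbing it into the dissipation term $-\lambda\int\varphi^2v\,g''(u)|\nabla u|^2$ of \eqref{eq:Ito positive part} with $c\sim N^{-1}$ and threshold $\sim N^2$; without identifying that structural property of the logarithm, the proof of the stated probability bound is not there.
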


\begin{proof} Clearly it is sufficient to prove the statement for $N>N_0$ for some $N_0$. Introduce the functions
\begin{equation}
f_h(x)= \left\{
\begin{array}{rl}
a_hx+b_h & \text{if } x< -h/2 \\
\log^+\frac{1}{x+h} & \text{if } x\geq-h/2,
\end{array} \right. \nonumber
\end{equation}
for $h>0$ where $a_h$ and $b_h$ is chosen such that $f_h$ and $f'_h$ are continuous. Let $\kappa$ be nonnegative a $C^{\infty}$ function on $\R$, bounded by $1$, supported on $\{|x|<1\}$, and having unit integral. Denote $\kappa_h(x)=h^{-1}\kappa(x/h)$ and
$$F_h=f_h\ast\kappa_{h/4}.$$
We  claim that $F_h$ has the following properties:
\begin{enumerate}[(i)]
\item $F_h(x)=0$ for $x\geq1$;
\item $F_h(x)\leq \log(2/h)$ for $x\geq 0$;
\item $F_h(x)\geq \log(1/2h)$ for $x\leq h/2$;
\item $F_h\in\mathcal{D}$ and $F''_h(x)\geq (F'_h(x))^2$ for $x\geq0$.
\end{enumerate}
The first three properties are obvious, while for the last one notice that $F_h$ has bounded second derivative, $f''_h(x)\geq (f'_h(x))^2$ for $x\geq h/2$ and $x\neq 1-h$,, and therefore, for $x\geq 0$
\begin{align*}
(F_h'(x))^2&=\left(\int f'_h(x-z)\kappa^{1/2}_{h/4}(z)\kappa^{1/2}_{h/4}(z)\,dz\right)^2\\
&\leq \int (f'_h(x-z))^2\kappa_{h/4}(z)\,dz\\
&\leq \int f_h''(x-z)\kappa_{h/4}(z)dz\\
& \ \  \ \ +[\lim_{y\rightarrow (1-h)^+}f_h'(y)-\lim_{y\rightarrow (1-h)^-}f_h'(y)]\kappa_{h/4}(x-1+h)=F_h''(x),
\end{align*}
where the integrals are understood in the usual Lebesgue sense and not as a formal expression for the action of distributions. Let us denote $v=F_h(u)$. Applying Lemma \ref{lem: Ito formula L2} and using the parabolicity condition, we get
$$
\int_{B_2}\varphi^2v_t\,dx-\int_{B_2}\varphi^2v_0\,dx\leq\int_0^t\int_{B_2} C\varphi\nabla\varphi\nabla v
-(\lambda/2) \varphi^2F_h''(u)(\nabla u)^2\,dx\,ds
$$
\begin{equation}\label{eq:intermediate0}
+\int_0^t\int_{B_2}\varphi^2M^kv\,dx\,dw^k_s
\end{equation}
for any $\varphi\in C_c^{\infty}.$ Let us denote the stochastic integral above by $m_t$, and notice that provided $|\varphi|\leq1$, 
$$
\langle m\rangle_t\leq C\int_0^t\int_{B_2}\varphi^2(\nabla v)^2\,dx\,ds.
$$
Let $c$ be such that $cC\leq\lambda/4$. From Lemma \ref{lem: martingale}, there exists a set $D_1$ with $P(D_1) \leq  e^{-2Nc}$, such that on $D_1^c$ we have

$$\int_{B_2}\varphi^2v_t\,dx-\int_{B_2}\varphi^2v_0\,dx$$
\begin{equation}\label{eq: intermediate 2}
\leq N+\int_0^t\int_{B_2} C\varphi\nabla\varphi\nabla v-(\lambda/2) \varphi^2F_h''(u)(\nabla u)^2+cC\varphi^2(\nabla v)^2\,dx\,ds.
\end{equation}
On $A \cap D_1^c$, by the property (iv) above, we have $F_h''(u)(\nabla u)^2\geq(\nabla v)^2$, and therefore
\begin{equation}\label{eq:intermediate1}
\int_{B_2} \varphi^2v_t\,dx\leq N+C\int_{B_2}|\nabla\varphi|^2\,dx+\int_{B_2} \varphi^2v_0\,dx.
\end{equation}
Let us denote 
$$
\mathcal{O}_t(h)=\{x\in B_{\rho}:u(t,x)\geq h\}.
$$
Choosing $\varphi$ to be $1$ on $B_{\rho}$, by properties (i), (ii), and (iii) of $F_h$ and \eqref{eq:intermediate1}, on $A \cap D_1^c$, for all $t\in[0, 4]$
\begin{align*}
|B_{\rho}\setminus\mathcal{O}_t(h/2)|\log(1/2h)&\leq C+N+(1-\eta)\log(2/h)|B_2|\\& \leq C+N+(1-\eta^2)\log(2/h)|B_{\rho}|.
\end{align*}
Hence
\begin{equation*}             \label{eq: before choosing h}
|\mathcal{O}_t(h/2)| \geq |B_\rho|-\frac{C+N}{\log(1/2h)}
-(1-\eta^2)\frac{\log(2/h)}{\log(1/2h)} |B_\rho|,
\end{equation*}
and choosing $N_0=C$ and $h=2e^{-C'N}$ for a sufficiently large $C'$ finishes the proof of the lemma.
\end{proof}

\begin{proof}[Proof of Theorem \ref{thm: harnack}]
By Lemma \ref{lem: weakweak}, there exists a set $D_1$ with $P(D_1) \leq Ce^{-cN}$ such that on $A \cap D_1^c$ we have
\begin{equation}                     \label{eq: estimate of the measure}
|\{(x\in B_\rho|\,u(t,x)\geq e^{-N}|\geq \eta^3|B_\rho|,
\end{equation}
for all $t \in [0,4]$. Let us denote $h:=e^{-N}$. For $0<\epsilon \leq h/2$, we introduce the function
\begin{equation}
f_\epsilon(x)= \left\{
\begin{array}{rl}
a_\epsilon x+b_\epsilon & \text{if } x< -\epsilon/2 \\
\log^+\frac{h}{x+\epsilon} & \text{if } x\geq-\epsilon/2,
\end{array} \right. \nonumber
\end{equation}
where $a_\epsilon$ and $b_\epsilon$ is chosen such that $f_\epsilon$ and $f'_\epsilon$ are continuous. Let $\kappa$ be a nonnegative  $C^{\infty}$ function on $\R$, bounded by $1$, supported on $\{|x|<1\}$, and having unit integral. Denote $\kappa_\varepsilon(x)=\varepsilon^{-1}\kappa(x/\varepsilon)$ and
$$F_\epsilon=f_\epsilon\ast\kappa_{\epsilon/4}.$$
Similarly to $F_h$ in the proof of Lemma \ref{lem: weakweak}, $F_\epsilon$ has the following properties:
\begin{enumerate}[(i)]
\item $F_\epsilon(x)=0$ for $x\geq h$;
\item $F_\epsilon(x)\leq\log(2h/\epsilon)$ for $x\geq 0$;
\item $F_\epsilon(x)\geq\log(h/(x+\epsilon))-1$ for $x\geq 0$;
\item $F_\epsilon\in\mathcal{D}$ and $F''_\epsilon(x)\geq (F'_\epsilon(x))^2$ for $x\geq0$.
\end{enumerate}
Let us denote $v=F_{\epsilon}(u)$. Similarly to \eqref{eq: intermediate 2}, there exists a set $D_2$ with $P(D_2) \leq C e^{-Nc}$, such that on $D_2^c$ we have
$$
\int_{B_2}\varphi^2v_t\,dx-\int_{B_2}\varphi^2v_0\,dx$$
$$
\leq N+\int_{0}^t\int_{B_2} C\varphi\nabla\varphi\nabla v-(\lambda/2) \varphi^2F_{\epsilon}''(u)(\nabla u)^2+(\lambda/4)\varphi^2(\nabla v)^2\,dx\,ds.
$$
On $A \cap D_2^c$, by property (iv), we have, 
\begin{equation}
\int_{0}^4 \int_{B_2} \varphi^2|\nabla v_t|^2 \,dxdt\leq C (N+\int_{B_2} |\nabla\varphi|^2\,dx+\int_{B_2} \varphi^2v_2\,dx).
\end{equation}
By choosing $\varphi \in C^\infty_c({B_2})$ with $0 \leq \varphi \leq 1$ and $\varphi={1}$ on $B_\rho$ we get,
$$
\int_{0}^4 \int_{B_\rho}|\nabla v_t|^2 \,dxdt\leq C (N+\int_{B_2} |\nabla\varphi|^2\,dx+\int_{B_2} \varphi^2v_0\,dx).
$$
Hence, by property (ii),
\begin{equation}                     \label{eq: before poincare's}
\int_{0}^4 \int_{B_\rho}|\nabla v_t|^2 \,dxdt\leq C N+ C+C\log\frac{2h}{\epsilon}.
\end{equation}
Using property (i), by a version of Poincar\'e's inequality (see, e.g., Lemma II.5.1, \cite{LA}) we get for all $t$
$$
\int_{B_\rho}|v_t|^2dx \leq C\frac{\rho^{2(d+1)}}{|\mathcal{O}_t(h)|^2} \int_{B_\rho}| \nabla v_t|^2 dx ,
$$
which, by virtue of \eqref{eq: estimate of the measure} and \eqref{eq: before poincare's} implies 
$$
\int_{0} ^4 \int_{B_\rho}|v_t|^2dx \leq C +CN+C\log\frac{2h}{\epsilon}.
$$
on $A\cap D_1^c\cap D_2^c$. By Theorem \ref{thm: supremum estimate} and noting that $G_{3/2}\subset[0,4]\times B_{\rho}$ for any $\rho=\rho(d,\eta)$ as defined in Lemma \ref{lem: weakweak}, we get that there exists a set $D_3\in \mathscr{F}$ with $P(D_3) \leq C N^{-\delta}$, such that on $A\cap D_1^c\cap D_2^c\cap D_3^c$ we have
$$
\sup_{(t,x) \in {G_1}} v_t(x) \leq  [N(C +CN+C\log\frac{2h}{\epsilon})]^{1/2}.
$$
By applying property (iii), we get
$$
\sup_{(t,x) \in {G_1}} \log \frac{h}{ u_t(x)+\epsilon} \leq [N (C +CN+C\log\frac{2h}{\epsilon})]^{1/2}+1,
$$
and therefore,
$$
\inf_{(t,x) \in {G_1}}  u_t(x) \geq he^{-[N(C+CN+C\log 2h -C\log \epsilon)]^{1/2}-1}- \epsilon.
$$      
Letting $\epsilon=e^{-c'N}$ with a sufficiently large $c'$, it is easy to see that the right-hand side above is bounded from below by $\epsilon$, finishing the proof.
\end{proof}
\vspace{0,5 cm}
In the following proof, whenever we refer  to Theorem \ref{thm: harnack}, we mean the particular case $\eta =1/2$. 
\begin{proof}[Proof of Theorem \ref{thm: continuity}]
  Consider the parabolic transformations $\mathfrak{P}_{\alpha,t',x'}$:
$$
t\rightarrow\alpha^2t+t',
$$
$$
x\rightarrow\alpha x+x'.
$$
It is easy to see that if $v$ is a solution of \eqref{SPDE} on a cylinder $Q$, then $v\circ\mathfrak{P}_{\alpha,t',x'}^{-1}$ is also solution of \eqref{SPDE}, on the cylinder $\mathfrak{P}_{\alpha,t',x'}Q$, with another sequence of Wiener martingales on another filtration, and with different coefficients that still satisfy Assumption \ref{as0} with the same bounds. To ease notation, for a cylinder $Q$ let $\mathfrak{P}_{Q}$ denote the unique parabolic transformation that maps $Q$ to $G$, if such exists. Also, for an interval $[s,r]\subset[0,4]$ let $\mathfrak{P}_{[s,r]}=\mathfrak{P}_{2/\sqrt{r-s},-4s/(r-s),0}$. That is, $\mathfrak{P}_{[s,r]}[s,r]\times B_1=[0,4]\times B_{2/\sqrt{r-s}}$, which, when $r-s\leq1$, contains $G$. 

Without loss of generality $x_0=0$ can and will be assumed, as will the almost sure boundedness of $u$ on $G$, since these can be achieved with appropriate parabolic transformations, using the boundedness obtained on sub-cylinders in Theorem \ref{thm: supremum estimate}. Also let us fix a probability $\delta>0$, denote the corresponding lower bound $3\epsilon_2$ obtained from the Harnack inequality, and take an arbitrary $0<\epsilon_1<\epsilon_2/2$.

Apply Theorem \ref{thm: supremum estimate} $(iii)$ twice,  with the function $f(r)=r$, with the interval $[t_0-4s,t_0+s]$, and with solutions $v=u-\sup_{\{t_0-4s\}\times B_2}u$ and $v=-u+\inf_{\{t_0-4s\}\times B_2}u$. Also notice that (for both choices of $v$) 
$$
\|v^+\|^2_{2,[t_0-4s,t_0+s]\times B_2}\leq Cs\|u\|^2_{\infty,G}\rightarrow0
$$
as $s\rightarrow0$ for almost every $\omega$, and thus in probability as well (recall that the fact that  the functions $v$ are well-defined and that the above - seemingly trivial - inequality holds, is justified in the proof of Corollary \ref{cor}). In other words,
$$
P(\|v^+\|^2_{2,[t_0-4s,t_0+s]\times B_2}>\alpha)
$$
can be made arbitrarily small by choosing $s$ sufficiently small. Therefore, we obtain an $s>0$ and an event $\Omega_0$, with $P(\Omega_0)>1-\delta$, such that on $\Omega_0$, 
$$
\sup_{[t_0-4s,t_0+s]\times B_1}u-\sup_{\{t_0-4s\}\times B_2}u<\epsilon_1^2/6
$$
$$\inf_{[t_0-4s,t_0+s]\times B_1}u-\inf_{\{t_0-4s\}\times B_2}u>-\epsilon_1^2/6.
$$
Let us rescale $u$ at the starting time:
$$
u'_{\pm}(t,x)=\pm\left(2\frac{u(t,x)-\sup_{\{t_0-4s\}\times B_2} u}{\sup_{\{t_0-4s\}\times B_2} u-\inf_{\{t_0-4s\}\times B_2} u}+1\right),
$$
that is, $\sup_{B_2} u'_{\pm}(t_0-4s,\cdot)= 1, \inf_{B_2} u'_{\pm}(t_0-4s,\cdot)=-1$. Now we can write $\Omega_0=\Omega_A\cup\Omega_B$, where 
\begin{itemize}
\item On $\Omega_A$, $\osc_{\{t_0-4s\}\times B_2}u<\epsilon_1/3$, and therefore, $\osc_{[t_0-4s,t_0+s]\times B_1}u<\epsilon_1/3+2\epsilon_1^2/6<\epsilon_1$;
\item On $\Omega_B$, $|u'_{\pm}|<1+2(\epsilon_1^2/6)/(\epsilon_1/3)=1+\epsilon_1$, on $[t_0-4s,t_0+s]\times B_1$.
\end{itemize}
Notice that in the event $\Omega_B$, on the cylinder $[t_0-4s,t_0+s]\times B_1$, the functions $u'_{\pm}/(1+\epsilon_1)+1$ take values between 0 and 2. Therefore one of $(u'_{\pm}/(1+\epsilon_1)+1)\circ\mathfrak{P}_{[t_0-4s,t_0+s]}^{-1}\Big|_G$ (see  Figure \ref{figure} below ), denoted for the moment by $u''$, satisfies the conditions of Theorem \ref{thm: harnack} with $A=\Omega_B$.

 We obtain that on an event $\Omega_B'$
$$
\inf_{G_1} u''>3\epsilon_2,
$$
and thus 
$$
\osc_{Q}u<\frac{(2-3\epsilon_2)(1+\epsilon_1)}{2}\osc_{\{t_0-4s\}\times B_2}u<(1-\epsilon_2)\osc_{\{t_0-4s\}\times B_2}u,
$$
where $Q=\mathfrak{P}_{[t_0-4s,t_0+s]}^{-1}G_1$. Moreover, $P(\Omega_B\setminus\Omega'_B)<\delta$. Also, notice that $(t_0,0)\in Q.$  Let us denote $\Omega_1=\Omega_A\cup\Omega'_B$.
We have shown the following lemma:
\begin{figure}[t]              \label{Fig}
\centering
\includegraphics[scale=0.5]{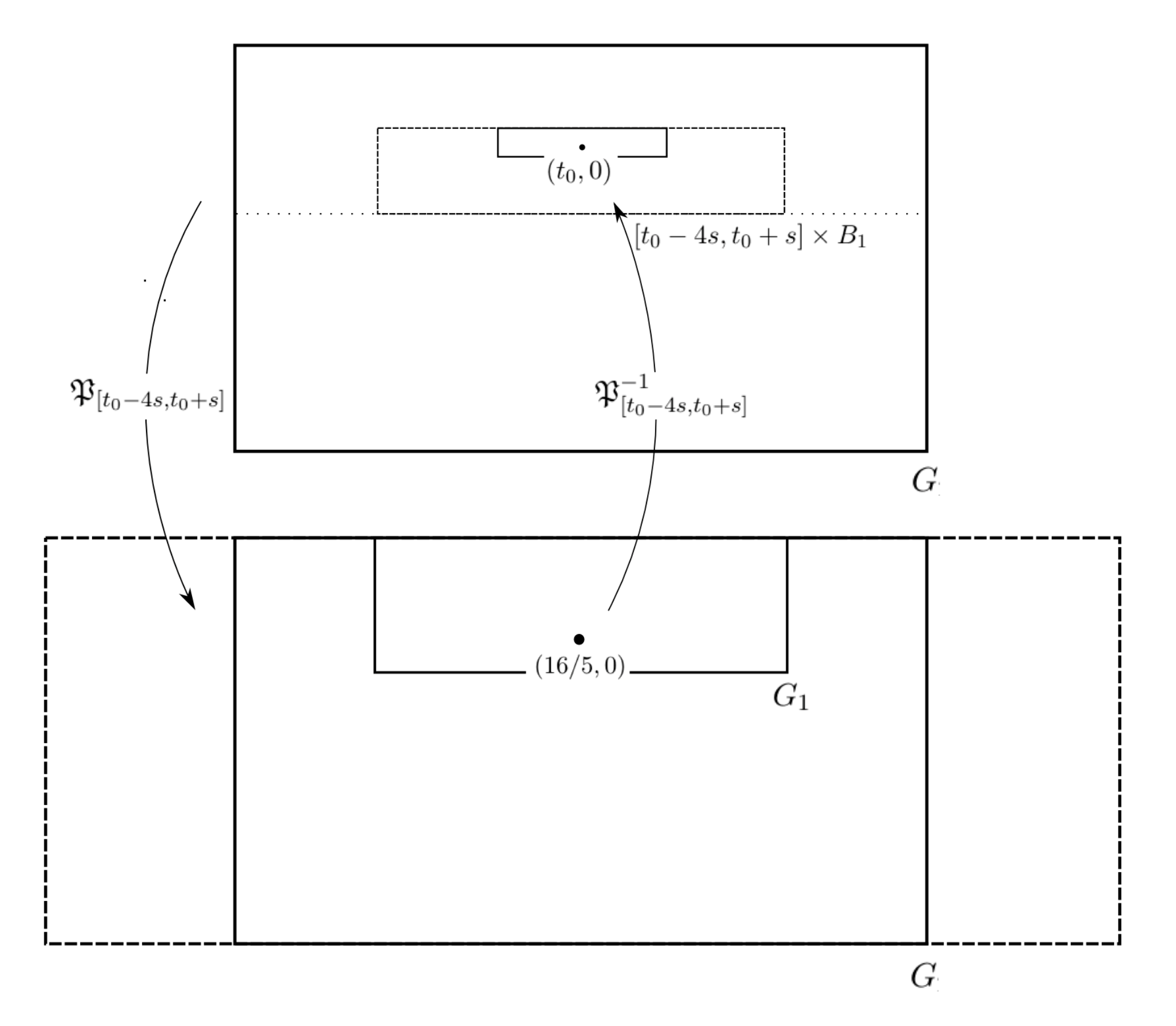}
\caption{The cylinders appearing in one iteration step}
\label{figure}
\end{figure}

\begin{lemma}\label{lem: intermediate}
Let $\delta>0$ and let $3\epsilon_2$ be the lower bound corresponding to the probability $\delta$ obtained from the Harnack inequality. For any $u$ that is a solution of \eqref{SPDE} on $G$, $t_0>0$, and for any sufficiently small $\epsilon_1>0$ there exists an $s>0$ and an event $\Omega_1$ such that

\begin{enumerate}[(i)]
\item $P(\Omega_1)>1-2\delta$;
\item On $\Omega_1$, at least one of the following is satisfied:
\begin{enumerate}[(a)]
\item $\osc_{Q}u<\epsilon_1$;
\item $\osc_{Q}u<(1-\epsilon_2) \osc_{G}u$,
\end{enumerate}
\end{enumerate}
where $Q=\mathfrak{P}_{[t_0-4s,t_0+s]}^{-1}(G_1)$.
\end{lemma}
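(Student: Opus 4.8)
The plan is to view this as a single oscillation-reduction step, assembled from the two main theorems already established: the local $L_\infty$-estimate of Theorem~\ref{thm: supremum estimate}(iii) and the weak Harnack inequality of Theorem~\ref{thm: harnack} applied with $\eta=1/2$. As in the proof of Theorem~\ref{thm: continuity}, I would first reduce to the case where $u$ is almost surely bounded on $G$, fix $s>0$ small enough that $0<t_0-4s$ and $t_0+s\le 4$, and abbreviate $M=\sup_{\{t_0-4s\}\times B_2}u$, $m=\inf_{\{t_0-4s\}\times B_2}u$, the $\mathscr{F}_{t_0-4s}$-measurable (a.s.\ finite) essential extrema of $u$ over the slice $\{t_0-4s\}\times B_2$.

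The first step propagates information off that slice into the thin slab $[t_0-4s,t_0+s]\times B_1$. One uses that $v_+:=u-M$ and $v_-:=m-u$ both solve \eqref{SPDE} on $[t_0-4s,t_0+s]\times B_2$ — subtracting a spatially constant adapted process changes neither $L_t$ nor $M^k_t$ — and that each has nonpositive data at time $t_0-4s$, so $v_\pm^+(t_0-4s,\cdot)\equiv0$. Applying Theorem~\ref{thm: supremum estimate}(iii) with $f(r)=r$ to $v_+$ and to $v_-$, and using $\|v_\pm^+\|_{2,[t_0-4s,t_0+s]\times B_2}^2\le Cs\|u\|_{\infty,G}^2$, which tends to $0$ a.s.\ (hence in probability) as $s\downarrow0$, one fixes $s$ small and obtains an event $\Omega_0$ with $P(\Omega_0)>1-\delta$ on which
$$\sup_{[t_0-4s,t_0+s]\times B_1}u<M+\tfrac{\epsilon_1^2}{6},\qquad \inf_{[t_0-4s,t_0+s]\times B_1}u>m-\tfrac{\epsilon_1^2}{6}.$$
Since $Q=\mathfrak{P}_{[t_0-4s,t_0+s]}^{-1}(G_1)\subset[t_0-4s,t_0+s]\times B_1$, this yields $\osc_Q u\le\osc_{\{t_0-4s\}\times B_2}u+\tfrac{\epsilon_1^2}{3}$ on $\Omega_0$.

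Next I would split $\Omega_0=\Omega_A\cup\Omega_B$ according to whether $\osc_{\{t_0-4s\}\times B_2}u<\epsilon_1/3$ or not. On $\Omega_A$ the displayed bound gives $\osc_Q u<\epsilon_1/3+\epsilon_1^2/3<\epsilon_1$, which is alternative~(a). On $\Omega_B$ one has $M-m\ge\epsilon_1/3>0$, so I normalise: $u'_\pm:=\pm\bigl(2\tfrac{u-M}{M-m}+1\bigr)$ satisfies $\sup_{B_2}u'_\pm(t_0-4s,\cdot)=1$, $\inf_{B_2}u'_\pm(t_0-4s,\cdot)=-1$, and by the first step $|u'_\pm|\le1+2\tfrac{\epsilon_1^2/6}{\epsilon_1/3}=1+\epsilon_1$ on $[t_0-4s,t_0+s]\times B_1$; hence $w_\pm:=\tfrac{u'_\pm}{1+\epsilon_1}+1$ takes values in $[0,2]$ there. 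Because $w_++w_-\equiv2$, for a suitable choice of sign the parabolically transformed function $u'':=w_\pm\circ\mathfrak{P}_{[t_0-4s,t_0+s]}^{-1}\big|_G$ — which again solves \eqref{SPDE} on $G$, now for a new filtration and noise sequence and with coefficients still obeying Assumption~\ref{as0} with the same constants, and satisfies $0\le u''\le2$ — lies in $\Lambda_{1/2}$ on $\Omega_B$. Applying Theorem~\ref{thm: harnack} with the value of $N$ corresponding to the probability $\delta$ (so that its lower bound is $3\epsilon_2$) produces $\Omega_B'\subset\Omega_B$ with $P(\Omega_B\setminus\Omega_B')\le\delta$ and $\inf_{G_1}u''\ge3\epsilon_2$ on $\Omega_B'$, whence $\osc_{G_1}u''\le2-3\epsilon_2$. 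Since $u$, $u'_\pm$, $w_\pm$ and $u''$ are affine in one another and $\mathfrak{P}_{[t_0-4s,t_0+s]}$ maps $Q$ onto $G_1$, writing $M-m=\osc_{\{t_0-4s\}\times B_2}u\le\osc_G u$ we get on $\Omega_B'$
$$\osc_Q u=\tfrac{(1+\epsilon_1)(M-m)}{2}\,\osc_{G_1}u''\le\tfrac{(1+\epsilon_1)(2-3\epsilon_2)}{2}(M-m)<(1-\epsilon_2)(M-m)\le(1-\epsilon_2)\osc_G u,$$
the strict inequality because $\epsilon_1<\epsilon_2/2$ forces $\epsilon_1(2-3\epsilon_2)<\epsilon_2$; this is alternative~(b). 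Taking $\Omega_1=\Omega_A\cup\Omega_B'$ gives $P(\Omega_1)\ge P(\Omega_0)-P(\Omega_B\setminus\Omega_B')>1-2\delta$.

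The analytic estimates are light; the delicate part is the measurability bookkeeping and the change of stochastic basis. One has to check that $v_\pm$ and the affine rescalings are genuine solutions on the sub-interval $[t_0-4s,t_0+s]$, that the $\omega$-dependent choice of sign in $w$ (equivalently, which side of $\{u(t_0-4s,\cdot)=\tfrac{M+m}{2}\}$ has the larger measure in the ball onto which $\mathfrak{P}_{[t_0-4s,t_0+s]}^{-1}$ carries the time-$0$ slice $B_2$) is $\mathscr{F}$-measurable, and — the main point — that Theorem~\ref{thm: harnack} may be invoked for $u''$ despite $\mathfrak{P}_{[t_0-4s,t_0+s]}^{-1}$ introducing a different filtration and a different sequence of Wiener processes; this is exactly why that theorem is phrased for an arbitrary solution rather than relative to a fixed driving noise. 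A lesser issue is making ``$s$ sufficiently small'' precise: the decay $\|v_\pm^+\|_{2,\mathrm{slab}}^2\to0$ is only needed in probability, $s$ is then chosen so that $P(\Omega_0)>1-\delta$ and frozen, which at once determines $Q$ and all the cylinders in the statement.
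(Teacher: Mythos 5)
Your proposal is correct and follows essentially the same route as the paper, which proves this lemma inline within the proof of Theorem \ref{thm: continuity}: Theorem \ref{thm: supremum estimate}(iii) with $f(r)=r$ applied to $u-M$ and $m-u$ on the thin slab to control the drift off the initial slice, the dichotomy $\Omega_A\cup\Omega_B$ according to whether $\osc_{\{t_0-4s\}\times B_2}u<\epsilon_1/3$, the affine normalisation and the choice of sign putting one of the rescaled solutions into $\Lambda_{1/2}$ after the parabolic transformation, and then Theorem \ref{thm: harnack} to shrink the oscillation by the factor $(1+\epsilon_1)(2-3\epsilon_2)/2<1-\epsilon_2$. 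Your explicit verifications (vanishing of $v_\pm^+$ at the initial time, $w_++w_-\equiv2$, the arithmetic $\epsilon_1(2-3\epsilon_2)<\epsilon_2$, and the measurability/change-of-basis caveats) match the paper's intent.
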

Now take $u=u^{(0)}$ and $t_0=t_0^{(0)}$ from the statement of the theorem and a sequence $(\epsilon_1^{(n)})_{n=0}^{\infty}\downarrow0$, and for $n\geq0$ proceed inductively as follows:
\begin{itemize}
\item Apply Lemma \ref{lem: intermediate} with $u^{(n)}$, $t_0^{(n)}$, and $\epsilon_1^{(n)}$, and take the resulting $\Omega_1^{(n)}$ and $Q^{(n)}$;
\item Let $u^{(n+1)}=u^{(n)}\circ\mathfrak{P}_{Q^{(n)}}^{-1}$ and $(t_0^{(n+1)},0)=\mathfrak{P}_{Q^{(n)}}(t_0^{(n)},0)$. 
\end{itemize}

On $\limsup_{n\rightarrow\infty}\Omega_1^{(n)}$ the function $u$ is continuous at the point $(t_0,0)$. Indeed, the sequence of cylinders $Q^{(0)},\mathfrak{P}_{Q^{(0)}}^{-1}Q^{(1)},\mathfrak{P}_{Q^{(0)}}^{-1}\mathfrak{P}_{Q^{(1)}}^{-1}Q^{(2)},\ldots$ contain $(t_0,0)$, and the oscillation of $u$ on these cylinders tends to 0. However, we have $P(\limsup_{n\rightarrow\infty}\Omega_1^{(n)})\geq 1-2\delta$, and since $\delta$ can be chosen arbitrarily small, $u$ is continuous at $(t_0,0)$ with probability 1, and the proof is finished.

\end{proof}

\begin{remark}It is natural to attempt to modify the above argument to bound expectations and higher moments of the oscillations, in the hope to apply Kolmogorov's continuity criterion and obtain H\"older estimates. A main obstacle appears to be to establish a uniform integrability property to a family of (normalized) oscillations. Indeed, the present Harnack inequality can bring down the oscillation by a given factor outside of a small event, and therefore one would like to exclude the possibility that the majority of the oscillation's mass is concentrated on that exceptional event.
\end{remark}

\section*{Acknowledgement}
The authors are grateful towards  Istv\'an Gy\"ongy for the fruitful discussions during the preparation of this paper. 






\end{document}